\newtheorem{theorem}{Theorem}[section]
\newtheorem{lemma}[theorem]{Lemma}
\newtheorem{proposition}[theorem]{Proposition}
\theoremstyle{definition}
\newtheorem{definition}[theorem]{Definition}
\newtheorem{example}[theorem]{Example}
\newtheorem{remark}[theorem]{Remark}
\DeclareMathOperator{\dimH}{dim_H}
\DeclareMathOperator{\dimb}{dim_B}
\DeclareMathOperator{\dimp}{dim_p}
\DeclareMathOperator{\udimb}{\overline{dim}_B}
\DeclareMathOperator{\ldimb}{\underline{dim}_B}
\DeclareMathOperator{\Id}{Id}
\numberwithin{equation}{section}
\newcommand{\ba}{{\bf a}}
\newcommand{\bi}{{\bf i}}
\newcommand{\bj}{{\bf j}}
\newcommand{\bk}{{\bf k}}
\begin{document}

\title[random code tree fractals]{Dimensions of random affine
                                  code tree fractals}

\author[EJ]{Esa J\"arvenp\"a\"a}
\author[MJ]{Maarit J\"arvenp\"a\"a}
\author[AK]{Antti K\"aenm\"aki}
\author[HK]{Henna Koivusalo}
\author[\"OS]{\"Orjan Stenflo}
\author[VS]{Ville Suomala}

\address{Department of Mathematical Sciences, P.O. Box 3000, 
  90014 University of Oulu, Finland}
\email{esa.jarvenpaa@oulu.fi}

\address{Department of Mathematical Sciences, P.O. Box 3000, 
  90014 University of Oulu, Finland}
\email{maarit.jarvenpaa@oulu.fi}

\address{Department of Mathematics and Statistics, P.O. Box 35, 
  40014 University of Jyv\"askyl\"a, Finland} 
\email{antti.kaenmaki@jyu.fi}

\address{Department of Mathematical Sciences, P.O. Box 3000, 
  90014 University of Oulu, Finland}
\email{henna.koivusalo@oulu.fi}

\address{Department of Mathematics, Uppsala University, P.O. Box 480, 
  75106 Uppsala, Sweden} 
\email{stenflo@math.uu.se} 
 
\address{Department of Mathematical Sciences, P.O. Box 3000, 
  90014 University of Oulu, Finland} 
\email{ville.suomala@oulu.fi}

\begin{abstract}
We calculate the almost sure Hausdorff dimension for a general class of random 
affine planar code tree fractals. The set of probability measures
describing the randomness includes natural measures in random $V$-variable and 
homogeneous Markov constructions.
\end{abstract}

\thanks{We acknowledge the support of the Centre of Excellence in Analysis and 
Dynamics Research funded by the Academy of Finland. HK is also grateful for 
Jenny and Antti Wihuri foundation. \"OS thanks the Esseen foundation. VS 
acknowledges the support of the Academy of Finland project \# 126976.} 

\maketitle

\section{Introduction}\label{intro}

The systematic study of dimensions of the attractors of iterated function 
systems was initiated by Hutchinson \cite{H}. He proved the formula for  
Hausdorff dimensions of attractors of self-similar iterated function systems 
satisfying the 
open set condition. Since then, iterated function systems have been studied 
extensively and nowadays there exists a huge literature on them. Here we 
mention only a few results relevant to our purposes. In the case of affine 
maps, there is no counterpart to the open set condition guaranteeing that the 
dimension of the attractor is given by a concrete formula. However, in 
\cite{F88} Falconer proved that writing the affine maps as 
$F_i(x)=T_i(x)+a_i$, the dimension of the attractor is Lebesgue almost surely 
independent of the translation vectors $a_i$ provided that the norms of the 
linear parts $T_i$ are less than $\tfrac 13$. Furthermore, the dimension is 
given by the unique zero of a natural pressure.
Solomyak \cite{S98} verified that $\tfrac 13$ may be 
replaced by $\tfrac 12$, and an example of Edgar \cite{E} (see also 
\cite{Er,PU,SS}) shows that $\tfrac 12$ is the best possible upper bound in 
this setting. 

In this paper we verify that the Hausdorff dimension is almost surely 
independent
of the translation vectors for general affine code tree fractals, that is,
for iterative constructions where the families of contractions may vary (see 
section~\ref{preli} for the definition). We also illustrate by
examples that for general affine code tree fractals the dimension is not 
always given by the zero of the pressure.   

A natural way to generalize deterministic iterated function systems
is to add randomness to the construction. This can be done in various different
ways.
Jordan, Pollicott and Simon \cite{JPS} studied a fixed affine iterated function
system having a small independent random perturbation in translation vectors
at each step of the construction. In this case
no nontrivial upper bound for the contraction ratios is needed in the analogue
of Falconer's result \cite{F88}. In \cite{FM09} Falconer and Miao studied
random subsets of a fixed self-affine fractal. The randomness is introduced by
choosing at each step of the construction a random subfamily of the 
original function system independently. 
They proved that the dimension of the random 
subset is almost surely given by the unique zero of the expected pressure.

Both in \cite{JPS} and \cite{FM09} the randomness is quite strong in the sense
that there is total independence both in space, that is, between different 
nodes at a fixed construction level, and in scale or time, that 
is, once a node is chosen, its descendants are selected independently of the 
previous history. We will consider probability distributions which  
have certain independence only in
time direction, more precisely, there exists almost surely
an infinite sequence of neck levels $N_n$ where all the sub-code-trees starting
from level $N_n$ are the same and the events depending only on the construction
before a neck level $N_k$ are independent of those depending only on the
construction after $N_k$. The structure between neck levels can be chosen quite 
freely. In other words, our construction is locally random but globally almost
homogeneous. 

A special case of our setting (which in fact is the main motivation for our 
study) is the natural probability measure $P$ of choosing random $V$-variable 
fractals considered by Barnsley, Hutchinson and Stenflo
in \cite{BHS2005,BHS2008,BHS08}. A $V$-variable fractal is a code tree fractal
where at each level of the code tree there are at most $V$ different 
sub code trees. In \cite{BHS08} a formula for the $P$-almost sure Hausdorff 
dimension of random self-similar $V$-variable fractals satisfying a uniform 
open set condition is proved. We extend this result in two ways: by replacing
similarities with affine maps and by considering a general class of probability
measures including the measure $P$ introduced in \cite{BHS08}.

The paper is organized as follows: In section \ref{preli} we give basic 
definitions and recall some well-known results. In section \ref{det} we 
extend Falconer's result concerning the almost sure constancy of the Hausdorff 
dimension \cite{F88} to deterministic code tree fractals (Theorem \ref{0510}) 
and give examples illustrating reasons why the Hausdorff 
dimension is not given by the zero of the natural pressure function.  
Sections \ref{random} and \ref{dimension} are dedicated to random code tree 
fractals and contain our main results: 
In Theorem \ref{pexists} we prove that, under quite weak assumptions on the
probability measure on the set of code trees having a neck structure,  
the zero of the pressure exists and is independent of
the code tree almost surely. 
In Theorem \ref{main} we verify that, under certain additional assumptions, the
zero of the pressure is equal to the typical Hausdorff dimension
of the code tree fractal almost surely. Finally, in 
Proposition~\ref{selfsimilar} it is observed that for self-similar code tree 
fractals the additional assumptions are not needed.

\section{Preliminaries}\label{preli}

We begin by defining a code tree fractal. Our definition is similar to the one
in \cite{BHS2008}. We denote by $\Lambda$ an index set. Let $D\in\mathbb N$ and
let ${\bf F}=\{F^\lambda\mid\lambda\in\Lambda\}$ be a family of iterated 
function systems such that 
$F^\lambda=\{f_1^\lambda,\dots,f_{M_\lambda}^\lambda\}$. Here for all 
$i=1,\dots,M_\lambda$ the map $f_i^\lambda\colon\mathbb R^D\to\mathbb R^D$ is
defined by 
$f_i^\lambda(x)= T_i^\lambda(x)+a_i^\lambda$, where $a_i^\lambda\in\mathbb R^D$
and $T_i^\lambda$ is a non-singular linear mapping with 
$\sup_{\lambda\in\Lambda,i=1,\dots,M_\lambda}\Vert T_i^\lambda\Vert<1$ and
$M=\sup_{\lambda\in\Lambda}M_\lambda<\infty$. Setting $I=\{1,\dots,M\}$, 
the length
of a word $\tau\in I^k$ is $|\tau|=k$. Consider
a function $\omega\colon\bigcup_{k=0}^\infty I^k\to\Lambda$, where
$I^0=\emptyset$. Let $\Sigma^{\omega}_{*}\subset\bigcup_{k=0}^\infty I^k$ be the 
unique set satisfying the following conditions:
\begin{itemize}
\item $\emptyset\in\Sigma^\omega_*$,
\item if $i_1\cdots i_k\in\Sigma^\omega_*$ and 
   $\omega(i_1\cdots i_k)=\lambda$, then $i_1\cdots i_kl\in\Sigma^\omega_*$ 
   for all $l\leq M_\lambda$ and $i_1\cdots i_kl\notin\Sigma^\omega_*$
   for any $l>M_\lambda$,
\item if $i_1\cdots i_k\notin\Sigma^\omega_*$, then 
   $i_1\cdots i_kl\notin\Sigma^\omega_*$ for any $l$.
\end{itemize}
The function $\omega$ restricted to $\Sigma^\omega_*$ is called 
an ${\bf F}$-valued code tree 
and the set of all ${\bf F}$-valued code trees is denoted by $\Omega$.
Equip $I^{\mathbb N}$ with the product topology. Let 
$\Sigma^\omega=\{\bi=i_1i_2\cdots\in I^{\mathbb N}\mid 
  i_1\cdots i_n\in\Sigma^\omega_*\text{ for all }n\in\mathbb N\}$ be the 
compact set of infinite paths corresponding to a code tree $\omega\in\Omega$.
For any $k\in\mathbb N$ and $\bi\in\Sigma^\omega\cup\bigcup_{j=k}^\infty I^j$, 
let $\bi_k=i_1\cdots i_k$ be the initial word of $\bi$ with length $k$. 
We use the notations
\[
f^\omega_{\bi_k}=f_{i_1}^{\omega(\emptyset)}\circ f_{i_2}^{\omega(i_1)} 
  \circ\dotsb\circ f_{i_k}^{\omega(i_1\cdots i_{k-1})}\text{ and }
T_{\bi_k}^\omega=T_{i_1}^{\omega(\emptyset)}T_{i_2}^{\omega(i_1)}\cdots  
  T_{i_k}^{\omega(i_1\cdots i_{k-1})}.
\]
For all $\bi\in\Sigma^{\omega}$, define   
$Z^{\omega}(\bi)=\lim_{k\to\infty}f^\omega_{\bi_k}(0)$, where $0\in\mathbb R^D$,
and set $A^\omega =\{ Z^\omega(\bi)\mid \bi\in\Sigma^\omega\}$. We refer to
$A^\omega$ as the attractor or the code tree fractal corresponding to the 
code tree $\omega\in\Omega$. The attractor $A^\omega$ is well-defined since the
maps $f_i^\lambda$ are uniformly contracting.
For $k\in\mathbb N$, $\omega\in\Omega$ and $\bi\in\Sigma^\omega$, let 
\[
[\bi_k]=\{\bj\in\Sigma^\omega\mid j_l=i_l\text{ for all }l=1,\dots,k\}
\]
be the cylinder starting with $i_1\cdots i_k$. The length of the cylinder  
$[\bi_k]$ is $k$.

\begin{remark}\label{subset}

(a) One could define code tree fractals
for contractions on complete metric spaces, or more generally for maps such 
that the limit  $Z^\omega(\bi)$ exist for any $ \bi\in\Sigma^\omega$. However, 
in this paper we consider only affine contractions on $\mathbb R^D$.

(b) Any compact subset of a fractal generated by a single iterated function 
system is a code tree fractal. Indeed, let $A^W$ be the attractor of an
iterated function system $W=\{w_1,\dots,w_N\}$ on $\mathbb R^D$
and let $K\subset A^W$ be 
compact. Setting $I=\{1,\dots,N\}$, we denote by 
$Z^W\colon I^{\mathbb N}\to\mathbb R^D$ 
the natural projection and define $\widetilde K=(Z^W)^{-1}(K)$. For 
$k\in\mathbb N\cup\{0\}$, let
\[ 
\widetilde K_k=\{\bi_k\in I^k\mid\bi_k\bj\in\widetilde K\text{ for some }
  \bj\in I^{\mathbb N}\}.
\]
Further, let $\Lambda=\{\lambda\subset I\mid\lambda\ne\emptyset\}$ and 
${\bf F}=\{F^\lambda\mid\lambda\in\Lambda\}$, 
where
$F^\lambda=\{w_i\mid i\in\lambda\}$. Defining the mapping
$\omega\colon\bigcup_{k=0}^\infty\widetilde K_k\to\Lambda$ by
\[
\omega(i_1\cdots i_{k-1})=\{i\mid i_1\cdots i_{k-1}i\in\widetilde K_k\},
\]
we have $A^\omega=K$. In particular, sub-self-affine sets  are code tree 
fractals as compact subsets of self-affine sets. This in turn implies that 
attractors of graph directed Markov systems generated by affine maps are code 
tree fractals as well since they are sub-self-affine sets. 
See \cite{F95,F4,KV09,MU03}. 

It is also easy to see that a code tree fractal can be expressed as a subset of
the attractor of a single possibly infinite iterated function system.
\end{remark}

The way deterministic constructions are usually  randomized depends heavily on 
the formalism used. It is useful to add some additional
structure to our space in order to be able to compare results expressed in 
different formalisms. 

\begin{example}\label{eqrelation}
Suppose $\{w_1,w_2,w_3\}$ is an 
iterated function system, where $w_i=T_i+a_i$ for $i=1,2,3$. Consider the
code tree fractal constructed in Remark~\ref{subset} corresponding to a compact
$K\subset A^W$. Letting $\lambda=\{1,2\}$ and $\lambda'=\{2,3\}$, we have 
$F^\lambda=\{T_1^\lambda+a_1^\lambda,T_2^\lambda+a_2^\lambda\}$ and 
$F^{\lambda'}=\{T_1^{\lambda'}+a_1^{\lambda'},T_2^{\lambda'}+a_2^{\lambda'}\}$,
where $a_1^\lambda=a_1$, $a_2^\lambda=a_2$, $a_1^{\lambda'}=a_2$ and 
$a_2^{\lambda'}=a_3$. Thus the translation vector $a_2$ is represented both by
$a_2^\lambda$ and $a_1^{\lambda'}$. 
\end{example}

To allow identifications in our construction, we suppose that the set 
$\widehat\Lambda=\{(\lambda,i)\mid\lambda\in\Lambda\text{ and } 
i=1,\dots,M_\lambda\}$ is equipped with an equivalence relation $\sim$
satisfying the following two conditions:
\begin{itemize}
\item the cardinality $\mathcal A$ of the set of equivalence classes 
      $\ba:=\widehat\Lambda/\sim$ is finite,
\item for every $\lambda\in\Lambda$ we have $(\lambda,i)\sim (\lambda,j)$ if 
      and only if $i=j$. 
\end{itemize}
We use the notation $\ba$ for the set of equivalence classes to emphasize that 
we will identify only the translation parts of the maps by the equivalence 
relation $\sim$. The assumption $\mathcal A<\infty$ guarantees that the 
Lebesgue measure, $\mathcal L^{D\mathcal A}$, can be used
on the space $(\mathbb R^D)^{\mathcal A}$. By the second assumption,
within any system 
$\{T_1^\lambda+a_1^\lambda,\dots,T_{M_\lambda}^\lambda+a_{M_\lambda}^\lambda\}$
different translation vectors are never identified. Moreover, if 
$(\lambda,i)\sim (\lambda',j)$, then 
$a_i^\lambda=a_j^{\lambda'}$ as vectors in $\mathbb R^D$. Hence, we may 
consider $\ba$ as an element of $\mathbb R^{D\mathcal A}$, and we denote the 
corresponding attractor by $A_\ba^\omega$.

For a non-singular linear map $T\colon\mathbb R^D\to\mathbb R^D$, let 
$\sigma_i=\sigma_i(T)$, $1\leq i\leq D$, be the singular values of $T$ defined 
as the square roots of the eigenvalues of $T^*T$, where $T^*$ is the adjoint of
$T$. The singular values are enumerated in the decreasing order, that is,
\[
0<\sigma_D\leq\sigma_{D-1}\leq\dots\leq\sigma_2\leq\sigma_1=\Vert T\Vert. 
\]
Recall that the singular values are the lengths of the semi-axes  of the 
ellipsoid $T(B(0,1))$, where $B(x,\rho)\subset\mathbb R^D$ is the closed ball
with radius $\rho>0$ centred at $x\in\mathbb R^D$. Define the singular value 
function as follows
\[
\Phi^\alpha(T)
 =\begin{cases}\sigma_1\sigma_2\cdots\sigma_{m-1}\sigma_m^{\alpha-m+1},&
                \text{if } 0\le\alpha\le D,\\
   \sigma_1\sigma_2\cdots\sigma_{D-1} \sigma_D^{\alpha-D+1},&\text{if }\alpha>D,
  \end{cases}  
\]
where $m$ is the integer such that $m-1\le\alpha<m$. The singular value 
function $\Phi^\alpha$ is defined also for $\alpha>D$ to ensure the 
existence of the affinity dimension (see section~\ref{det}) for all affine code
tree fractals. We have 
$\sigma_D(T)^\alpha\le\Phi^\alpha(T)\le\sigma_1(T)^\alpha$, and in 
particular, $\Phi^\alpha(T)= s^\alpha$ if $T$ is a similitude with 
$\sigma_i=s$ for all $i=1,\dots,D$. The singular
value function is submultiplicative, that is,  
\[
\Phi^\alpha(TU)\le\Phi^\alpha(T)\Phi^\alpha(U)
\]
for all linear maps $T$ and $U$. For this and other properties of the singular 
value function see for example \cite{F88}. Throughout this paper we will 
make the assumption that there 
exist $\underline\sigma,\overline\sigma\in (0,1)$ such that
\[
0<\underline\sigma\le\sigma_D(T_i^\lambda)\le\sigma_1(T_i^\lambda)
  \le\overline\sigma<1
\]
for all $\lambda\in\Lambda$ and for all $i=1,\dots,M_\lambda$. Note that 
whilst the condition $\overline\sigma<1$ follows from the uniform 
contractivity assumption, the existence of $\underline\sigma>0$ is an extra
assumption since $\Lambda$ may be infinite even though $\mathcal A<\infty$.
The lower bound $\underline\sigma>0$ is not needed in section~\ref{det}.

\section{Deterministic  code tree fractals}\label{det} 
 
In this section we show that the result of Falconer \cite{F88} (sharpened 
by Solomyak \cite{S98}) concerning self-affine iterated function systems can
be extended to code tree fractals, that is, the Hausdorff dimension 
$\dimH(A_\ba^\omega)$ of a code tree fractal $A_\ba^\omega$ is typically
independent of the translation vector $\ba\in\mathbb R^{D\mathcal A}$. 
Theorem~\ref{0510} is not only an important ingredient in the proof of our main
theorem (Theorem~\ref{main}) but is also of independent interest. 
The original proofs by Falconer and Solomyak in the case $|\Lambda|=1$ 
generalize in a 
straightforward manner. For the convenience of the reader, we present the 
essential ideas.

We denote by $M^\alpha$ the $\alpha$-dimensional natural measure defined 
for all Borel subsets $E$ of $\Sigma^\omega$ by
\[
M^\alpha(E)=\lim_{j\to\infty}M_j^\alpha(E),
\]
where
\[
M_j^\alpha(E)=\inf\{\sum_{\bi_k\in J}\Phi^\alpha(T_{\bi_k}^\omega)\mid J\subset
  \Sigma_*^\omega, E\subset\bigcup_{\bi_k\in J}[\bi_k]\text{ and }k\ge j\}.
\]
The affinity dimension of $\Sigma^\omega$ is 
\[
d^\omega=\inf\{\alpha\mid M^\alpha(\Sigma^\omega)=0\} 
  =\sup\{\alpha\mid M^\alpha(\Sigma^\omega)=\infty\}.
\]  

Next lemma is the key tool in the proof of Theorem~\ref{0510}.
 
\begin{lemma}\label{nueng} 
Let $\rho>0$ and assume that 
$\overline\sigma<\tfrac 12$. If $\alpha$ is non-integral with $0<\alpha<D$ then
there exists a constant $c_1>0$ such that for any $k\in\mathbb N$ and 
$\bi,\bj\in\Sigma^\omega$ with $\bi_k=\bj_k$ and $i_{k+1}\neq j_{k+1}$, we have
\[
\int_{\ba\in B(0,\rho)}\frac{d\mathcal L^{D\mathcal A}(\ba)}
   {|Z_\ba^\omega(\bi)-Z_\ba^\omega(\bj)|^\alpha}
 \le\frac{c_1}{\Phi^\alpha(T_{\bi_k}^\omega)}\quad\text{for all }\omega\in\Omega.
\]
\end{lemma}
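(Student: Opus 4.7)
The plan is to adapt the Falconer--Solomyak transversality scheme \cite{F88,S98} to the code-tree setting, with the extra bookkeeping forced by the equivalence relation on $\widehat\Lambda$. First I would exploit the common prefix $\bi_k = \bj_k$: affine linearity lets me factor out $T_{\bi_k}^\omega$, writing
\[
Z_\ba^\omega(\bi) - Z_\ba^\omega(\bj) = T_{\bi_k}^\omega \bigl(Z_\ba^{\omega'}(\sigma^k\bi) - Z_\ba^{\omega'}(\sigma^k\bj)\bigr),
\]
where $\omega'(\tau) := \omega(\bi_k \tau)$ is the shifted sub-code-tree and $\sigma^k$ denotes the shift.

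Next I would identify pivot coordinates. Setting $\lambda = \omega(\bi_k)$, the hypothesis $i_{k+1} \neq j_{k+1}$ combined with the second defining property of $\sim$ forces $[(\lambda, i_{k+1})]$ and $[(\lambda, j_{k+1})]$ to be distinct equivalence classes, so $a_{i_{k+1}}^\lambda$ and $a_{j_{k+1}}^\lambda$ are independent coordinates of $\ba \in \mathbb R^{D\mathcal A}$. Expanding the inner difference term by term, the leading (position-$(k+1)$) contribution is $a_{i_{k+1}}^\lambda - a_{j_{k+1}}^\lambda$ with coefficient $\Id$, while occurrences of these two classes at deeper levels $k+1+m$ contribute linear maps of operator norm at most $\overline\sigma^m$. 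Applying Fubini to integrate first along the pivot direction and using $\overline\sigma < \tfrac12$ to keep the geometric tail of deeper-level contributions under control (Solomyak's transversality), the inner integral reduces, up to bounded multiplicative constants, to
\[
\int_{B(0,\rho')} \frac{d\mathcal L^D(u)}{|T_{\bi_k}^\omega u|^\alpha}
\]
for some $\rho'$ depending only on $\rho$ and $\overline\sigma$.

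To finish I would invoke the standard Falconer singular-value integral
\[
\int_{B(0,R)} \frac{d\mathcal L^D(u)}{|Tu|^\alpha} \leq \frac{C_\alpha R^{D-\alpha}}{\Phi^\alpha(T)},
\]
valid for non-integer $\alpha \in (0,D)$ and proved by diagonalising $T$ via its singular value decomposition and integrating along the semi-axes; non-integrality of $\alpha$ is precisely what keeps the radial integrand on the $m$-th axis integrable (\cite[Lemma~2.2]{F88}). Substituting $T = T_{\bi_k}^\omega$ and integrating out the remaining coordinates of $\ba$ over $B(0,\rho)$ then produces the claimed bound, with the constant $c_1$ absorbing $\rho$.

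The main obstacle is the transversality step: the change of variables along the pivot direction must remain bi-Lipschitz despite possibly many deeper reappearances of the two pivot equivalence classes mediated by $\sim$. This is precisely where the hypothesis $\overline\sigma < \tfrac12$ is essential, mirroring Solomyak's sharpening of Falconer's original $\overline\sigma < \tfrac13$ bound; the code-tree setting additionally requires tracking how a given equivalence class under $\sim$ can propagate through the expansion of $Z_\ba^{\omega'}$, which is essentially combinatorial bookkeeping once the analytic transversality is in place.
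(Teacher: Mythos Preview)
Your proposal is correct and follows essentially the same route as the paper: factor out $T_{\bi_k}^\omega$, change variables in a single pivot coordinate, and invoke \cite[Lemma~2.2]{F88}. What you call ``combinatorial bookkeeping'' is in fact the heart of the transversality step rather than an afterthought: the paper introduces the first level $n_1\ge 2$ at which the two paths land in the \emph{same} equivalence class, assumes without loss of generality that this common class is not the pivot $\beta_1$, and thereby sharpens the naive tail bound $\tfrac{2\overline\sigma}{1-\overline\sigma}$ (which would only yield $\overline\sigma<\tfrac13$) to one strictly below $1$ whenever $\overline\sigma<\tfrac12$.
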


\begin{proof}
The points in $ A_\ba^\omega$ can be expressed as 
\begin{equation}\label{proj}
Z_\ba^\omega(\bi)=a_{i_1}^{\omega(\emptyset)}+ T_{i_1}^{\omega(\emptyset)}
  a_{i_2}^{\omega(i_1)}+T_{i_1}^{\omega(\emptyset)}T_{i_2}^{\omega(i_1)}
  a_{i_3}^{\omega(i_1i_2)}+\dotsb
\end{equation}
If $\bi,\bj\in [\bi_k]$, write $\bi=\bi_k\bi'$ and $\bj=\bi_k\bj'$ where
$i'_l=i_{k+l}$ and $j'_l=j_{k+l}$ for $l\in\mathbb N$. Now
\[
\int_{\ba\in B(0,\rho)}\frac{d\mathcal L^{D\mathcal A}(\ba)}
  {|Z_\ba^\omega(\bi)-Z_\ba^\omega(\bj)|^\alpha}=\int_{\ba\in B(0,\rho)}
  \frac{d\mathcal L^{D\mathcal A}(\ba)}{|T_{\bi_k}^\omega(Z_\ba^{\omega'}(\bi')
  -Z_\ba^{\omega'}(\bj'))|^\alpha}, 
\]
where the code tree $\omega'$ is defined by $\omega'(\bj_l)=\omega(\bi_k\bj_l)$
for $l\in\mathbb N$.  

Let
\[
n_1=\inf\{k\ge 2\mid (\omega'(\bi'_{k-1}),i'_k)\sim(\omega'(\bj'_{k-1}),j'_k)\}.
\]
We consider only the case $n_1<\infty$; the remaining case $n_1=\infty$ is 
easier (the second sum is not needed in \eqref{normestimate}). 
Letting $\ba=\{\beta_1,\dots,\beta_{\mathcal A}\}$, we may 
assume without loss of generality that 
$[(\omega'(\emptyset),i'_1)]=\beta_1$, $[(\omega'(\emptyset),j'_1)]=\beta_2$
and 
$[(\omega'(\bi'_{n_1-1}),i'_{n_1})]=[(\omega'(\bj'_{n_1-1}),j'_{n_1})]
  \ne\beta_1$. 
Define a linear map 
$E\colon\mathbb R^{D\mathcal A}\to\mathbb R^{D\mathcal A}$ by 
\[
E(\beta_1,\dots,\beta_{\mathcal A})=(y_1,\beta_2,\dots,\beta_{\mathcal A}),
\]
where
\[
y_1=Z_\ba^{\omega'}(\bi')-Z_\ba^{\omega'}(\bj')=\beta_1-\beta_2+
  \sum_{i=1}^{\mathcal A}L_i(\beta_i)
\]
and $L_i$ is a linear transformation on $\mathbb R^D$ 
for all $i=1,\dots,\mathcal A$
(recall \eqref{proj}). Further, since $\overline\sigma<\tfrac 12$, we have
\begin{equation}\label{normestimate}
\|L_1\|\le\sum_{k=1}^{n_1-2}\overline\sigma^k+\sum_{k=n_1}^\infty 
  2\overline\sigma^k=\frac{\overline\sigma-\overline\sigma^{n_1-1}}
  {1-\overline\sigma}+\frac{2\overline\sigma^{n_1}}{1-\overline\sigma} 
  <\frac{\overline\sigma}{1-\overline\sigma}<1.
\end{equation}
This implies that $\Id+L_1$ is 
inver\-tible (where the identity transformation on $\mathbb R^D$ is denoted by
$\Id$), 
and therefore, $E$ is invertible and $\det(E)=\det(\Id+L_1)\ge c_2>0$. 
By a change of variables, we 
obtain, using \cite[Lemma 2.2]{F88} in the last inequality, that
\begin{align*}
&\int_{\ba\in B(0,\rho)}\frac{d\mathcal L^{D\mathcal A}(\ba)}
  {|T_{\bi_k}^\omega(Z_\ba^{\omega'}(\bi')-Z_\ba^{\omega'}(\bj'))|^\alpha}
  = |\det(E)|^{-1}\int_{y\in E(B(0,\rho))}\frac{d\mathcal L^{D\mathcal A}(y)}
  {|T_{\bi_k}^\omega(y_1)|^\alpha}\\
 &\le c_2^{-1}\int_{B^D(0,\rho)}\dots\int_{B^D(0,\rho)}\int_{B^D(0,R)}
  \frac{d\mathcal L^D(y_1)\cdots d\mathcal L^D(y_{\mathcal A})}
  {|T_{\bi_k}^\omega(y_1)|^\alpha}\le\frac{c_1}{\Phi^\alpha(T_{\bi_k}^\omega)} 
\end{align*}
for some $0<R<\infty$ and $c_1<\infty$. Here the superscript $D$ emphasises 
that $B^D(0,r)$ is a closed ball in $\mathbb R^D$.
Note that $L_1$ and thus $E$ depend on $\bi,\bj$ and $\omega$, but $c_2$, $R$ 
and $c_1$ may be chosen to be independent of $\bi$, $\bj$ and $\omega$.
The assumption that $\alpha$ is non-integral with $0<\alpha<D$ is needed when 
applying \cite[Lemma 2.2]{F88}.
\end{proof}

Now we are ready to prove the main theorem of this section. The Hausdorff
dimension is denoted by $\dimH$.

\begin{theorem}\label{0510}
Let $\omega\in\Omega$ and assume that $\overline\sigma<\tfrac 12$. Then 
\[
\dimH(A_\ba^\omega)=\min\{D,d^\omega\}   
\]
for $\mathcal L^{D\mathcal A}$-almost all $\ba\in\mathbb R^{D\mathcal A}$.
\end{theorem}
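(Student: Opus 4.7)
The plan is to follow the standard Falconer--Solomyak strategy in two steps: establish the upper bound $\dimH(A_\ba^\omega)\le\min\{D,d^\omega\}$ for every $\ba$, then use a potential-theoretic argument based on Lemma~\ref{nueng} to prove the matching lower bound for almost every $\ba$.

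For the upper bound, note first that $\dimH(A_\ba^\omega)\le D$ trivially, since $A_\ba^\omega\subset\mathbb R^D$. For $\dimH(A_\ba^\omega)\le d^\omega$, fix $\alpha>d^\omega$ with $\alpha<D$ (if $d^\omega\ge D$ there is nothing to prove). By definition of $d^\omega$, $M^\alpha(\Sigma^\omega)=0$, so for every $\varepsilon>0$ there is a cover of $\Sigma^\omega$ by cylinders $[\bi_k]$ with $\sum\Phi^\alpha(T_{\bi_k}^\omega)<\varepsilon$. Each set $f^\omega_{\bi_k}(B(0,R))$, with $R$ chosen large enough that all attractors $A_\ba^\omega$ lie in $B(0,R)$ uniformly (using $\underline\sigma$ is not needed here, only the uniform contraction and boundedness of the translations in a compact set), is an ellipsoid with semi-axes proportional to the singular values of $T_{\bi_k}^\omega$; a standard covering of such an ellipsoid by balls of radius $\sigma_m(T_{\bi_k}^\omega)$ uses at most $C\sigma_1\cdots\sigma_{m-1}\sigma_m^{-(m-1)}$ balls, so the $\alpha$-cost of this cover is $\le C\Phi^\alpha(T_{\bi_k}^\omega)$. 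Summing, $\mathcal H^\alpha(A_\ba^\omega)\le C\varepsilon$, hence $\mathcal H^\alpha(A_\ba^\omega)=0$.

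For the lower bound, fix a non-integral $\alpha$ with $\alpha<\min\{D,d^\omega\}$ and let $\rho>0$. Since $\alpha<d^\omega$, one has $M^\alpha(\Sigma^\omega)=\infty$; a standard mass-distribution argument produces a Borel probability measure $\mu$ on $\Sigma^\omega$ with $\mu([\bi_k])\le c\,\Phi^\alpha(T_{\bi_k}^\omega)$ for every cylinder (one applies the max-flow/min-cut or Kolmogorov consistency construction at finite levels, using submultiplicativity of $\Phi^\alpha$ and the fact that $\sum_{\bi_k}\Phi^\alpha(T_{\bi_k}^\omega)$ does not tend to $0$). Push $\mu$ forward to $\nu_\ba=(Z_\ba^\omega)_*\mu$ on $A_\ba^\omega$ and consider the $\alpha$-energy. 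Applying Fubini and splitting the diagonal by the longest common prefix gives
\[
\int_{B(0,\rho)}\!\!\!\int\!\!\int\frac{d\mu(\bi)\,d\mu(\bj)\,d\mathcal L^{D\mathcal A}(\ba)}{|Z_\ba^\omega(\bi)-Z_\ba^\omega(\bj)|^\alpha}
=\sum_{k=0}^\infty\sum_{\bi_k\in\Sigma_*^\omega}\sum_{l\neq l'}\int_{[\bi_k l]}\!\!\int_{[\bi_k l']}\!\!\int_{B(0,\rho)}\frac{d\ba\,d\mu(\bi)\,d\mu(\bj)}{|Z_\ba^\omega(\bi)-Z_\ba^\omega(\bj)|^\alpha}.
\]
By Lemma~\ref{nueng} the inner integral is at most $c_1/\Phi^\alpha(T_{\bi_k}^\omega)$, so the right-hand side is bounded by
\[
c_1\sum_{k=0}^\infty\sum_{\bi_k\in\Sigma_*^\omega}\frac{\mu([\bi_k])^2}{\Phi^\alpha(T_{\bi_k}^\omega)}\le c_1 c\sum_{k=0}^\infty\sum_{\bi_k\in\Sigma_*^\omega}\mu([\bi_k])\cdot\frac{\mu([\bi_k])}{\Phi^\alpha(T_{\bi_k}^\omega)}\le c_1c\sum_{k=0}^\infty r^k<\infty,
\]
after upgrading the measure construction so that a super-geometric decay $\mu([\bi_k])\le c\,\Phi^{\alpha'}(T_{\bi_k}^\omega)$ for some $\alpha'\in(\alpha,d^\omega)$ is used in one of the two factors (this is where $\alpha<d^\omega$ is essential; by submultiplicativity $\Phi^{\alpha'}/\Phi^\alpha\le\overline\sigma^{(\alpha'-\alpha)k}$). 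Hence for $\mathcal L^{D\mathcal A}$-almost every $\ba\in B(0,\rho)$ the $\alpha$-energy of $\nu_\ba$ is finite, which by Frostman's lemma gives $\dimH(A_\ba^\omega)\ge\alpha$. Letting $\rho\to\infty$ along a countable sequence and $\alpha$ increase through non-integral values to $\min\{D,d^\omega\}$ finishes the proof.

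The main technical obstacle is the construction of the auxiliary measure $\mu$ with the two-sided Frostman-type control on cylinders, and the book-keeping needed to make the resulting energy series geometrically convergent; all other ingredients (transversality, Frostman's lemma, the cover argument for the upper bound) are either given by Lemma~\ref{nueng} or standard. The restriction $\alpha$ non-integral and $\alpha<D$ is inherited from Lemma~\ref{nueng} and is handled by choosing a sequence of non-integral $\alpha$ approaching $\min\{D,d^\omega\}$.
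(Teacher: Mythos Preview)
Your proposal is correct and follows essentially the same Falconer--Solomyak route as the paper: the upper bound via the ellipsoid covering argument (which the paper simply cites as \cite[Proposition~5.1]{F88}), and the lower bound via a Frostman-type measure on $\Sigma^\omega$ combined with Lemma~\ref{nueng} and an energy estimate. The only cosmetic difference is where the auxiliary exponent is placed: the paper constructs $\mu$ at level $\alpha$ and computes the $s$-energy for $s<\alpha$, whereas you construct $\mu$ at level $\alpha'>\alpha$ and compute the $\alpha$-energy---both produce the same geometric decay $\Phi^{\text{larger}}/\Phi^{\text{smaller}}\le\overline\sigma^{k\cdot\text{gap}}$ needed to sum the series.
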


\begin{proof}
As in \cite[Proposition 5.1]{F88} it follows that
\begin{equation}\label{upperbound}
\dimH(A_\ba^\omega)\le d^\omega
\end{equation}
for every $\ba\in\mathbb R^{D\mathcal A}$. Thus it suffices to prove that
$\dimH(A_\ba^\omega)\ge\min\{D,d^\omega\}$ for $\mathcal L^{D\mathcal A}$-almost
all $\ba\in\mathbb R^{D\mathcal A}$.
Let $\alpha$ be non-integral such that $0<\alpha<\min\{D,d^\omega\}$.  
As in \cite[Lemma 4.2]{F88} (see also \cite[Proposition 2.8]{RV}), there exists
a finite Borel measure $\mu^\omega$ on $\Sigma^\omega$ and a constant 
$c(\omega)$ such that 
\begin{equation}\label{0507}
\mu^\omega([\bi_k])\le c(\omega)\Phi^{\alpha}(T_{\bi_k}^\omega)
\end{equation} 
for any cylinder $[\bi_k]$.

Let $\rho>0$ and $s<\alpha$. From Lemma~\ref{nueng} and \eqref{0507} we get
\begin{align*}
\int_{\Sigma^\omega}&\int_{\Sigma^\omega}\int_{\ba\in B(0,\rho)}
   \frac{d\mathcal L^{D\mathcal A}(\ba)d\mu^\omega(\bi)d\mu^\omega(\bj)}
   {|Z_\ba^\omega(\bi)-Z_\ba^\omega(\bj)|^s}\\
 &\le\sum_{k=0}^\infty\sum_{\bi_k\in\Sigma_*^\omega}\Big(\sup_{\bj,\bj'\in [\bi_k],
   j_{k+1}\neq j'_{k+1}}\int_{\ba\in B(0,\rho)}\frac{\mu^\omega([\bj_k])
   \mu^\omega([\bj'_k])}{|Z_\ba^\omega(\bj)-Z_\ba^\omega(\bj')|^s}
   d\mathcal L^{D\mathcal A}(\ba)\Big)\\
 &\leq c_1\sum_{k=0}^\infty\sum_{\bi_k\in\Sigma_*^\omega}\frac{\mu^\omega([\bi_k])
   \mu^\omega([\bi_k])}{\Phi^s(T_{\bi_k}^\omega)}\le c(\omega)c_1\sum_{k=0}^\infty
   \sum_{\bi_k\in\Sigma_*^\omega}\frac{\mu^\omega([\bi_k])
   \Phi^\alpha(T_{\bi_k}^\omega)}{\Phi^s(T_{\bi_k}^\omega)}\\
 &\leq c(\omega)c_1\mu^\omega(\Sigma^\omega)\sum_{k=0}^\infty
   \overline\sigma^{k(\alpha-s)}<\infty.
\end{align*}
Finally, \cite[Lemma 5.2]{F88} (essentially Fubini's theorem and the mass 
distribution principle) implies $\dimH(A_\ba^\omega)\ge\alpha$ for 
$\mathcal L^{D\mathcal A}$-almost all $\ba\in\mathbb R^{D\mathcal A}$. The claim
follows by choosing a sequence $\alpha_i\uparrow\min\{D,d^\omega\}$.   
\end{proof}

It is common to relate the Hausdorff dimension to various notions of pressure. 
We will discuss this issue in the remaining part of this section. We start
by defining the natural pressure.
\begin{definition}\label{pressuredef}
For every $k\in\mathbb N$ and $\alpha\ge 0$, let
\[
S^\omega(k,\alpha)=\sum_{\bi_k\in\Sigma_*^\omega}\Phi^\alpha(T_{\bi_k}^\omega).
\]
Define 
\[
p_{\inf}^\omega(\alpha)=\liminf_{k\to\infty}\frac{\log S^\omega(k,\alpha)}k
  \text{ and }
p_{\sup}^\omega(\alpha)=\limsup_{k\to\infty}\frac{\log S^\omega(k,\alpha)}k. 
\]
If $p_{\inf}^\omega(\alpha)=p_{\sup}^\omega(\alpha)$, the common value is called 
the (natural) pressure $p^\omega(\alpha)$.
\end{definition}
As in the next section, it is easy to see that $p_{\inf}^\omega$ and 
$p_{\sup}^\omega$ are decreasing functions in $\alpha$ with uniquely defined 
$\alpha_0 \leq \alpha_1$ (depending on $\omega$) such that 
$p_{\inf}^\omega(\alpha_0)=p_{\sup}^\omega(\alpha_1)=0$. 

In \cite{F88} it is shown that in the case $|\Lambda|=1$ the natural pressure 
$p^\omega$ exists for self-affine iterated function systems
 and $d^\omega$ is the unique $\alpha$ such
that $p^\omega(\alpha)=0$, that is, the Hausdorff dimension of the attractor
is given by the unique zero of the pressure for 
$\mathcal L^{D\mathcal A}$-almost all $\ba\in\mathbb R^{D\mathcal A}$.

Below we 
construct examples illustrating various reasons why this result fails 
for general code tree fractals. All our examples are in $\mathbb R$ but 
they can easily be extended to $\mathbb R^D$ for any $D\in\mathbb N$.

\begin{example}\label{pressure1}
There exist code trees for which the pressure 
$p^\omega(\alpha)$ do not exist for any $\alpha>0$. 

For example, let $i=0,1$, and define $f_i,g_i:[0,1]\to[0,1]$ by 
$f_i(x)=\frac x8+\frac 78i$ and $g_i(x)=\frac x4+\frac 34i$ for all 
$x\in[0,1]$. Let $F^1=\{f_0,f_1\}$ and $F^2=\{g_0,g_1\}$. The equivalence 
relation $\sim$ is trivial, that is, $\ba\in\mathbb R^4$. Letting 
$1=N_0<N_1<N_2<\ldots$ be integers, set $\omega(\emptyset)=2$ and for all 
$l=0,1,2,\dots$
\begin{align*}
\omega(\bi_k)=\begin{cases}
             1,&\text{ if }N_{2l}\le k<N_{2l+1},\\
             2,&\text{ if }N_{2l+1}\le k<N_{2(l+1)}.
\end{cases}
\end{align*}
It is easy to see that for a sufficiently rapidly increasing  
sequence $(N_i)$ we obtain
\[
p_{\inf}^\omega(\alpha)=(1-3\alpha)\log2\text{ and }
p_{\sup}^\omega(\alpha)=(1-2\alpha)\log2.
\]
Observe that for an open set of translation vectors $\ba$ we have
\[
\udimb(A_\ba^\omega)=\dimp(A_\ba^\omega)=\frac 12\text{ and } 
\ldimb(A_\ba^\omega)=\dimH(A_\ba^\omega)=\frac 13,
\]
where $\ldimb$ and $\udimb$ are the lower and upper box 
counting dimensions, respectively, and $\dimp$ is the packing dimension.
\end{example} 

The following example shows that the Hausdorff dimension of a code tree 
fractal may be strictly smaller than the unique zero of the pressure for
all translation vectors.

\begin{example}\label{pressure2}
We construct a code tree $\omega$ for which the pressure 
$p^\omega(\alpha)$ exists for all $\alpha\ge 0$ and there exists a unique 
$d$ such that $p^\omega(d)=0$. However, $\dimH(A_\ba^\omega)<d$ for all 
translation vectors $\ba$.

Let $0<r<R\le\frac 13$. We consider two systems $F=\{f_1, f_2, f_3\}$ and
$G=\{g_1, g_2, g_3\}$ consisting of similarities on $\mathbb R$ such that 
$f_i(x)=rx+a_i^1$ and $g_i(x)=Rx+a_i^2$ for all $i=1,2,3$ and $x\in\mathbb R$
with the trivial equivalence relation $\sim$, that is, $\ba\in\mathbb R^6$.
Taking a sequence of integers 
$1=N_0<N_1<N_2<\ldots$, define a code tree $\omega$ as follows:
for each $n\in\mathbb N\cup\{0\}$, $m\in\{1,2,3\}$ and 
$\tau\in\bigcup_{k=0}^\infty\{1,2,3\}^k$ for which 
$N_{3n+(m-1)}\le|\tau|<N_{3n+m}$, set
\begin{align*}
\omega(\tau)&=\begin{cases}
               F,&\text{ if } \tau_1=m,\\
               G &\text{ otherwise.}
\end{cases}
\end{align*}
If $N_k\to\infty$ fast enough, we obtain for all $\alpha\ge0$
\[ 
p^\omega(\alpha)=\lim_{k\to\infty}\frac{\log S^\omega(k,\alpha)}k
  =\lim_{k\to\infty}\frac{\log(3^kR^{k\alpha})}k=\log 3+\alpha\log R,
\]
giving $p^\omega(\alpha)=0$ if and only if $\alpha=-\log 3/\log R$. On
the other hand, it is straightforward to see that
$M^\alpha(\Sigma^\omega)<\infty$ if $\alpha>-\log 3/\log r$. This implies
by \eqref{upperbound} that
\[
\dimH(A_\ba^\omega)\le d^\omega\le-\log 3/\log r<-\log 3/\log R. 
\]
Furthermore,
$\dimp(A_\ba^\omega)=-\log 3/\log R$ in an open set of translation vectors 
$\ba\in\mathbb R^6$.

Observe that 
$p^\omega(\alpha)$ does not exist if
we restrict $\Sigma^\omega$ to any of the branches 
$\Sigma_m^\omega=\{\bi\in\Sigma^\omega\mid i_1=m\}$ for $m\in\{1,2,3\}$. It is 
possible to modify the construction in such a way that
the proportion of nodes where $\omega(\bi_k)=F$ decreases to zero as $k$ tends
to infinity but for every $\bi$ there are infinitely many $l$ for which 
$\omega(\bi_k)=F$ for all $N_l\le k<N_{l+1}$. 
Then for all $\alpha\ge 0$, we have 
$p^\omega(\alpha)=\log3+\alpha\log R$, and this
remains true also for all branches of $\Sigma^\omega$. The modification does not
affect the Hausdorff dimension of the code tree fractal. 
\end{example}

In the previous example the packing dimension of the code tree fractal is 
equal to
the unique zero of the pressure in an open set of translation vectors.
The last example of this section indicates that this is not always the case.
 
\begin{example}\label{pressure3}
There exist a code tree $\omega$ such that the pressure 
$p^\omega(\alpha)$ exists for all $\alpha\ge0$ and for all translation
vectors $\ba$ neither
$\dimH(A_\ba^\omega)$ nor $\dimp(A_\ba^\omega)$ agrees with
the unique zero of the pressure.

For $i=0,1$, define $f_i:[0,1]\to[0,1]$ by $f_i(x)=\frac x2+a_i$ for
all $x\in[0,1]$, where $a_0=0$ and $a_1=\tfrac 12$. Set $F^1=\{f_0,f_1\}$, 
$F^2=\{f_0\}$ and $F^3=\{f_1\}$. We identify $a_0$ in $F^2$ with $a_0$ in $F^1$
and  $a_1$ in $F^3$ with $a_1$ in $F^1$.
Let $\omega$ be such that the corresponding attractor is 
$A_\ba^\omega=\{0\}\cup\{\frac 1n\mid n\in\mathbb N\}$ 
(see Remark~\ref{subset}).
Clearly, $\dimH(A_\ba^\omega)=\dimp(A_\ba^\omega)=0$ and 
$\dimb(A_\ba^\omega)=\frac 12$. 
Moreover, denoting by $N(k)$ the number of dyadic intervals of length $2^{-k}$
that meet $A_\ba^\omega$, we obtain for all $\alpha\ge0$
\[
p^\omega(\alpha)=-\alpha\log2+\lim_{k\to\infty}\frac{\log N(k)}k,
\]
implying $p^\omega(\alpha)=0$ if and only 
if $\alpha=\dimb(A_\ba^\omega)=\frac 12$. Let us now
 consider translations of this system.
Since the attractor $A_\ba^\omega$ is a countable set
for all $\ba\in\mathbb R^2$, we have $\dimp(A^\omega_\ba)=0$ for all
$\ba\in\mathbb R^2$. Note that the zero of the pressure $p^\omega$ is 
independent of $\ba\in\mathbb R^2$.   
\end{example}
 
We finish this section with a proposition concerning 1-variable code trees
(for the definition see
Example \ref{assumpsat}).

\begin{proposition}\label{V259}
Suppose that $\omega$ is a 1-variable (homogeneous) code tree and ${\bf F}$ 
contains only similarities. Then 
$d^\omega$ equals the unique $\alpha_0$ such that $p_{\inf}^\omega(\alpha_0)=0$.
\end{proposition}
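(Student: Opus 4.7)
The plan is to reduce everything, via the two structural hypotheses, to a single product formula for $S^\omega(k,\alpha)$, then combine a standard covering estimate with a natural Bernoulli-type measure. Because $\omega$ is 1-variable, $\omega(\bi_k)$ depends only on the level $k$; write $\omega(\bi_k)=\lambda_k$. Because every $T_i^\lambda$ is a similarity of ratio $s_i^\lambda$, the singular value function is exactly multiplicative along words: $\Phi^\alpha(T_{\bi_k}^\omega)=\prod_{j=1}^k (s_{i_j}^{\lambda_{j-1}})^\alpha$. Summing over $\bi_k\in\Sigma_*^\omega\cap I^k$ then factorizes as
\[
S^\omega(k,\alpha)=\prod_{j=1}^k \phi_j(\alpha),\qquad \phi_j(\alpha)=\sum_{i=1}^{M_{\lambda_{j-1}}}(s_i^{\lambda_{j-1}})^\alpha.
\]
The bound $s_i^\lambda\le\overline\sigma$ yields $\log\phi_j(\alpha_2)-\log\phi_j(\alpha_1)\le(\alpha_2-\alpha_1)\log\overline\sigma$ uniformly in $j$ for $\alpha_1<\alpha_2$, so $p_{\inf}^\omega$ is strictly decreasing, recovering the uniqueness of $\alpha_0$ already noted after Definition~\ref{pressuredef}.

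For the easy direction $d^\omega\le\alpha_0$, fix $\alpha>\alpha_0$; then $p_{\inf}^\omega(\alpha)<0$, so $S^\omega(k_n,\alpha)\to 0$ along some subsequence. Covering $\Sigma^\omega$ by the partition into all length-$k_n$ cylinders in $\Sigma_*^\omega$ gives $M_j^\alpha(\Sigma^\omega)\le S^\omega(k_n,\alpha)$ for $k_n\ge j$, hence $M^\alpha(\Sigma^\omega)=0$ and $d^\omega\le\alpha$.

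The main step is the lower bound $d^\omega\ge\alpha_0$, for which I would introduce, for fixed $\alpha<\alpha_0$, the Bernoulli-type measure
\[
\mu^\omega([\bi_k])=\prod_{j=1}^k\frac{(s_{i_j}^{\lambda_{j-1}})^\alpha}{\phi_j(\alpha)}=\frac{\Phi^\alpha(T_{\bi_k}^\omega)}{S^\omega(k,\alpha)}.
\]
By 1-variability the normalizers $\phi_j(\alpha)$ depend only on the level, so the one-step weights sum to $1$ and Kolmogorov consistency holds, producing a Borel probability measure on $\Sigma^\omega$. Any cover $J\subset\Sigma_*^\omega$ of $\Sigma^\omega$ by cylinders of length $\ge j$ can be thinned to its minimal elements, which form an antichain subcover with disjoint cylinders and a smaller $\Phi^\alpha$-sum; on such a cover, $\sum_{\bi_k\in J}\mu^\omega([\bi_k])=\mu^\omega(\Sigma^\omega)=1$, and therefore
\[
\sum_{\bi_k\in J}\Phi^\alpha(T_{\bi_k}^\omega)=\sum_{\bi_k\in J}\mu^\omega([\bi_k])\,S^\omega(|\bi_k|,\alpha)\ge\inf_{k\ge j}S^\omega(k,\alpha).
\]
Since $p_{\inf}^\omega(\alpha)>0$ forces $S^\omega(k,\alpha)\to\infty$, the right-hand side tends to infinity with $j$, so $M^\alpha(\Sigma^\omega)=\infty$, giving $d^\omega\ge\alpha$ and, by taking $\alpha\uparrow\alpha_0$, the claimed equality. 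The crux, and the only place both hypotheses are essential, is the exact identity $\sum_{\bi_k\in J}\mu^\omega([\bi_k])=1$ on arbitrary antichain covers: without 1-variability the level-wise normalization breaks, and without similarities only submultiplicativity of $\Phi^\alpha$ is available, so the Bernoulli weights would no longer match the singular value function exactly.
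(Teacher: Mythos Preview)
Your proof is correct, but it takes a genuinely different route from the paper's. The upper bound $d^\omega\le\alpha_0$ is handled identically in both. For the lower bound, however, the paper argues by contrapositive: it fixes $\beta>d^\omega$, takes a finite cover $J$ with $\sum_{\tau\in J}\Phi^\beta(T_\tau^\omega)<\varepsilon$, and then uses a minimization trick---assuming $J$ minimizes the $\Phi^\beta$-sum among covers with word lengths in $[n_{\min},n_{\max}]$, homogeneity and multiplicativity force $\sum_{\tau\in J_\zeta}\Phi^\beta(T_\tau^\omega)=\Phi^\beta(T_\zeta^\omega)$ for every $\zeta$ of length $n_{\min}$, whence $S^\omega(n_{\min},\beta)<\varepsilon$ and $p_{\inf}^\omega(\beta)\le 0$.

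Your argument is instead a direct mass-distribution principle: you build the level-wise Bernoulli measure $\mu^\omega$ explicitly from the product formula and use it to bound $M_j^\alpha$ from below by $\inf_{k\ge j}S^\omega(k,\alpha)$. This is arguably the more natural and constructive approach; it also makes transparent exactly where both hypotheses enter (the exact identity $\mu^\omega([\bi_k])=\Phi^\alpha(T_{\bi_k}^\omega)/S^\omega(k,\alpha)$). The paper's extremal argument, on the other hand, avoids constructing any measure and instead exploits that the optimal cover must be ``flat'' at level $n_{\min}$---a neat observation, but one that relies on the same structural facts in a less explicit way. Both approaches are of comparable length and difficulty.
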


\begin{proof}
Letting $\beta>\alpha_0$, there is $c<1$ satisfying
\[
\liminf_{k\rightarrow\infty}\left(S^\omega(k,\beta)\right)^{1/k}<c.
\] 
Consequently, for arbitrarily large values of $k$, we get 
$\sum_{\bi_k\in\Sigma_*^\omega}\Phi^\beta(T_{\bi_k}^\omega)<c^k$ which implies
that $M^\beta(\Sigma^{\omega})=0$. Letting $\beta\downarrow\alpha_0$, we get
$d^\omega\leq\alpha_0$.

To prove the opposite inequality, consider $\beta>d^\omega$. Then
$M^\beta(\Sigma^{\omega})=0$. Given $\varepsilon>0$, we find an index set
$J\subset\Sigma^{\omega}_*$ such that
$\Sigma^{\omega}\subset\bigcup_{\tau\in J}[\tau]$ and 
$\sum_{\tau\in J}\Phi^\beta(T^\omega_\tau)<\varepsilon$. 
Since $\Sigma^\omega$ is compact, we may assume that $J$ is finite.
Defining 
\[
n_{\min}=\min\{|\tau|\,\mid\,\tau\in J\}\text{ and }
n_{\max}=\max\{|\tau|\,\mid\,\tau\in J\},
\] 
we may assume that $J$ minimizes
\begin{equation}\label{eq:min}
\sum_{\tau\in K}\Phi^\beta(T^\omega_\tau)
\end{equation}
among all covering sets $K$ that satisfy $n_{\min}\leq|\tau|\leq n_{\max}$ for 
all $\tau\in K$. Consider $\zeta\in\Sigma_{n_{\min}}^\omega$ and let 
$J_\zeta=\{\tau\in J\,\mid\,[\tau]\subset[\zeta]\}$. Since $J$ minimizes 
\eqref{eq:min} and each map $f^{\lambda}_i$ is similarity, it follows from the 
homogeneity of $\omega$ that 
$\sum_{\tau\in J_\zeta}\Phi^\beta(T^\omega_\tau)=\Phi^\beta(T^\omega_\zeta)$. 
Consequently, we obtain 
\[
S^\omega(n_{\min},\beta)=\sum_{\tau\in J}\Phi^\beta(T^\omega_\tau)
<\varepsilon.
\]
Choosing sufficiently large $n_{\min}\in\mathbb N$ and sufficiently small
$\varepsilon>0$, it
follows that $\liminf_{k\rightarrow\infty}S^\omega(k,\beta)=0$. This 
yields
$p^\omega_{\inf}(\beta)\leq 0$, and therefore, $\alpha_0\leq\beta$. Letting 
$\beta\downarrow d^\omega$, we get $\alpha_0\leq d^\omega$.
\end{proof}

\begin{remark}\label{UOSC}
There is not much to be said about the dimension of a code tree fractal 
for a fixed
translation parameter $\ba\in\mathbb R^{D\mathcal A}$. The following can be 
gleaned from the proof of
\cite[Theorem 7.3]{F86}: Let $\omega\in\Omega$. Assume that
all the maps are similarities and for some fixed 
$\ba\in\mathbb R^{D\mathcal A}$ the following uniform open set condition is 
satisfied: there is a non-empty open set $O$ such that for each  
$\lambda\in\Lambda$ we have 
\begin{equation}\label{UOSC2} 
\bigcup_{m=1}^{M_\lambda}f_m^\lambda(O)\subset O\text{ and }
  f_m^\lambda(O)\cap f_n^\lambda(O)=\emptyset\text{ if }m\neq n.
\end{equation} 
Then $\dimH(A_\ba^\omega)=d^\omega$.
\end{remark}

\section{Random code tree fractals and pressure}\label{random} 

As illustrated in the previous section, in general, the dimension of a code
tree fractal is not necessarily given by the zero of the natural pressure. In 
this section we consider a general class 
of random affine code tree fractals for which the pressure almost surely
exists, is independent of the code tree and has a unique zero. 
When constructing the random sets in \cite{FM09} and \cite{JPS}, independent
choices are made at different steps of the construction and the limiting sets 
have a stochastically self-repeating structure both in space and in scale
whilst in our model the random sets are spatially nearly homogeneous.
More precisely, suppose that $(\mu_k)_{k=0}^\infty$ is a family 
of probability measures, where each $\mu_k$ is a measure on the set of finite 
code trees of length $k$. Suppose also that $\nu$ is a probability measure on 
$\mathbb N$ with finite first moment. This generates a probability measure $P$ 
on $\Omega$ in the 
following way: Choose a random number $N_1$ according to $\nu$. Select labels 
of nodes from $\Lambda$ from level $0$ up to level $N_1-1$ at random according 
to $\mu_{N_1-1}$. This specifies a randomly chosen code tree up to level 
$N_1-1$. Repeat 
this procedure, that is, generate a realization of $N_2-N_1$ according to 
$\nu$. For some fixed subtree rooted at level $N_1$ choose labels of nodes of 
this subtree at random according to the probability measure $\mu_{N_2-N_1-1}$. 
For all other subtrees rooted at level $N_1$ select the same labels as in 
the generated one. This specifies the randomly chosen code tree up to level 
$N_2-1$. Continuing in this manner will uniquely define $P$ according to 
Kolmogorov's extension theorem. 

Next we present a convenient formalism to study constructions described above. 

\begin{definition}\label{space}
Let  $\widetilde\Omega$ be the set of
$(\omega,N)\in\Omega\times\mathbb N^{\mathbb N}$ such that
\begin{itemize}
\item $N_m<N_{m+1}$ for all $m\in\mathbb N$,
\item if $\bi_{N_m}\bj_l,\bi'_{N_m}\in\Sigma_*^\omega$, then 
$\bi'_{N_m}\bj_l\in\Sigma_*^\omega$ and 
$\omega(\bi_{N_m}\bj_l)=\omega(\bi'_{N_m}\bj_l)$,
\end{itemize}
where for each $(\omega,N)\in\widetilde\Omega$ the sequence 
$(N_m)_{m\in\mathbb N}$ is the list of \textbf{neck levels}, that is, all the 
sub code trees rooted at a neck level $N_m$ are identical. 
Define $\Xi\colon\widetilde\Omega\to\widetilde\Omega$ by 
$\Xi(\omega,N)=(\hat\omega,\hat N)$, where 
$\hat N_m=N_{m+1}-N_1$ and $\hat\omega(\bj_l)=\omega(\bi_{N_1}\bj_l)$
for all $m,l\in\mathbb N$. The elements of $\widetilde\Omega$ are
denoted by $\tilde\omega$. We equip $\widetilde\Omega$ with the topology 
generated by the cylinders
\begin{align*}
[(\omega,N)_m]=\{(\hat\omega,\hat N)\in\widetilde\Omega &\mid
  \hat N_i=N_i\text{ for all }i\le m\text{ and }\hat\omega(\tau)=\omega(\tau)\\
&\text{ for all }\tau\text{ with } |\tau|<N_m\}
\end{align*}
and use the Borel $\sigma$-algebra on $\widetilde\Omega$.
\end{definition}

\begin{remark}\label{topology} 
(a) Observe that since $N_1$ is a neck level, the definition of $\Xi$ is 
independent of the choice of $\bi_{N_1}$. With the chosen
topology, $\Xi$ is continuous. Since there is no uniform upper bound 
for $N_1$, the space $\widetilde\Omega$ is not compact.

(b) The functions $p^\omega$, $\Phi^\alpha(T^\omega)$ etc. defined  
in the previous sections have natural extensions to $\widetilde\Omega$, that 
is, $p^{(\omega,N)}=p^\omega$, $\Phi^\alpha(T^{(\omega,N)})=\Phi^\alpha(T^\omega)$ 
etc. We denote them by $p^{\tilde\omega}$, $\Phi^\alpha(T^{\tilde\omega})$ etc. 
\end{remark} 

We complete this section by proving the existence of the zero of the 
pressure.

\begin{theorem}\label{pexists}
Let $0<\underline\sigma\le\overline\sigma<1$ be as in section~\ref{preli}.
Assume that $P$ is an ergodic $\Xi$-invariant probability measure on 
$\widetilde\Omega$ such that 
$\int_{\widetilde\Omega}N_1(\tilde\omega)\,dP(\tilde\omega)<\infty$. 
Then for $P$-almost all $\tilde\omega\in\widetilde\Omega$ the pressure 
$p^{\tilde\omega}(\alpha)$ exists for all $\alpha\in[0,\infty[$ and is 
independent of $\tilde\omega\in\widetilde\Omega$. 
Furthermore, $p^{\tilde\omega}$ is strictly decreasing and there 
exists a unique $\alpha_0$ such that $p^{\tilde\omega}(\alpha_0)=0$ for 
$P$-almost all $\tilde\omega\in\widetilde\Omega$. 
\end{theorem}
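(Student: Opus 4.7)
The plan is to apply Kingman's subadditive ergodic theorem to a cocycle indexed by the neck levels $N_m$, transfer the resulting limit to the full sequence using the uniform contraction bounds of section~\ref{preli}, and finally deduce the regularity properties of $\alpha\mapsto p^{\tilde\omega}(\alpha)$ from a Lipschitz estimate. Fix $\alpha\ge 0$ and define $F_m(\tilde\omega):=\log S^{\tilde\omega}(N_m(\tilde\omega),\alpha)$. The neck property from Definition~\ref{space} guarantees that every word $\bi_{N_{m+k}}\in\Sigma_*^{\tilde\omega}$ factorizes uniquely as $\bi_{N_m}\bj$ where $\bj$ ranges over the words of length $N_k(\Xi^m\tilde\omega)$ in $\Sigma_*^{\Xi^m\tilde\omega}$, and the matrix $T^\omega$ associated with the suffix coincides with $T^{\Xi^m\tilde\omega}_{\bj}$. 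Combined with submultiplicativity of $\Phi^\alpha$ this yields the subadditive cocycle inequality $F_{m+k}(\tilde\omega)\le F_m(\tilde\omega)+F_k(\Xi^m\tilde\omega)$. Since $F_1\le N_1\log M$ and $N_1\in L^1(P)$, Kingman's theorem together with ergodicity of $P$ produces a constant $C(\alpha)$ with $F_m/m\to C(\alpha)$ almost surely; the trivial one-word lower bound $F_m\ge N_m\alpha\log\underline\sigma$ keeps $C(\alpha)$ finite.

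Next I transfer this to the full sequence. Birkhoff applied to the stationary increments $N_1\circ\Xi^{m-1}$ gives $N_m/m\to\bar N:=\int N_1\,dP$ almost surely, so $\log S^{\tilde\omega}(N_m,\alpha)/N_m\to p(\alpha):=C(\alpha)/\bar N$ along the neck subsequence. For $N_m\le k<N_{m+1}$, submultiplicativity of $\Phi^\alpha$ combined with $\sigma_D(T_1T_2)\ge\sigma_D(T_1)\sigma_D(T_2)$ and the uniform bounds $\underline\sigma,\overline\sigma$ gives
\[
\underline\sigma^{(k-N_m)\alpha}\,S^{\tilde\omega}(N_m,\alpha)\le S^{\tilde\omega}(k,\alpha)\le M^{k-N_m}\overline\sigma^{(k-N_m)\alpha}\,S^{\tilde\omega}(N_m,\alpha).
\]
Since $X_m:=N_1\circ\Xi^m$ is a stationary integrable sequence, Borel--Cantelli applied to $\sum_m P(X_m>\varepsilon m)\le\varepsilon^{-1}E[N_1]<\infty$ yields $X_m/m\to 0$, hence $(N_{m+1}-N_m)/N_m\to 0$ almost surely. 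The above two-sided bound then shows that the difference $\log S^{\tilde\omega}(k,\alpha)/k-\log S^{\tilde\omega}(N_m,\alpha)/N_m$ vanishes, so $\log S^{\tilde\omega}(k,\alpha)/k\to p(\alpha)$ for every fixed $\alpha$.

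To upgrade this to a single full-measure event valid for all $\alpha\in[0,\infty)$ and to obtain the remaining properties, I would use that whenever $\alpha<\beta$,
\[
\underline\sigma^{k(\beta-\alpha)}\Phi^\alpha(T^\omega_{\bi_k})\le\Phi^\beta(T^\omega_{\bi_k})\le\overline\sigma^{k(\beta-\alpha)}\Phi^\alpha(T^\omega_{\bi_k}),
\]
because $\underline\sigma^k\le\sigma_D(T^\omega_{\bi_k})\le\sigma_1(T^\omega_{\bi_k})\le\overline\sigma^k$ and each extra factor in the singular value function falls in this range. Summing gives that $\alpha\mapsto\log S^{\tilde\omega}(k,\alpha)/k$ is Lipschitz with constant $|\log\underline\sigma|$ uniformly in $k$ and $\tilde\omega$, so almost sure convergence at all rational $\alpha$ extends to all $\alpha\in[0,\infty)$ on a common full-measure event, and the estimate passes to the limit to yield $p(\beta)\le p(\alpha)+(\beta-\alpha)\log\overline\sigma<p(\alpha)$ for every $\beta>\alpha$. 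Thus $p$ is continuous and strictly decreasing. Since $p(0)\ge 0$ (cardinality at least one) and $p(\alpha)\to-\infty$, the intermediate value theorem combined with strict monotonicity provides the unique zero $\alpha_0$.

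The main obstacle is the clean verification of the cocycle inequality. It requires unpacking Definition~\ref{space} carefully enough to see that the admissible suffixes after any level-$N_m$ prefix form exactly the tree $\Sigma_*^{\Xi^m\tilde\omega}$ independently of the prefix, and that the linear parts $T_i^\lambda$ appearing in those suffixes are precisely those encoded by $\Xi^m\tilde\omega$; once this bookkeeping is settled the rest is a routine combination of submultiplicativity, uniform contractivity, and the Kingman--Birkhoff machinery.
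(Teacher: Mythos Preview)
Your proof is correct and follows essentially the same route as the paper: Kingman applied to the subadditive cocycle $F_m=\log S^{\tilde\omega}(N_m,\alpha)$, Birkhoff for $N_m/m\to\bar N$, a two-sided sandwich to pass from the neck subsequence to all $k$, and the uniform estimate $\underline\sigma^{k\delta}\le S(k,\alpha+\delta)/S(k,\alpha)\le\overline\sigma^{k\delta}$ to obtain strict monotonicity and the unique zero. The only cosmetic differences are that the paper deduces $(N_{m+1}-N_m)/N_m\to 0$ directly from $N_m/m\to\bar N$ rather than via Borel--Cantelli, and handles the ``for all $\alpha$ on one full-measure set'' issue by Fubini plus continuity of the $\liminf$ and $\limsup$ rather than by your rationals-plus-Lipschitz argument; both variants are standard and equivalent.
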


\begin{proof} 
Consider $\alpha\in[0,\infty[$. For $n<m\in\mathbb N\cup\{0\}$, let
\[
\Sigma_*^{\tilde\omega}(n,m)=\{i_{N_n+1}\cdots i_{N_m}\mid\bi_{N_n}i_{N_n+1}\cdots
  i_{N_m}\in\Sigma_*^{\tilde\omega}\}, 
\]
where $N_0=0$. The fact that $N_n$ is a neck level implies that
the definition is independent of the choice of 
$\bi_{N_n}$. Setting
\[
 X_{n,m}(\tilde\omega)=\log\bigl(\sum_{i_{N_n+1}\cdots i_{N_m}
  \in\Sigma_*^{\tilde\omega}(n,m)}
  \Phi^\alpha(T_{i_{N_n+1}}^{\tilde\omega(i_1\cdots i_{N_n})}\cdots 
  T_{i_{N_m}}^{\tilde\omega(i_1\cdots i_{N_m-1})})\bigr),
\]
we have 
\[
X_{n+1,m+1}=X_{n,m}\circ\Xi
\]
by the definition of $\Xi$. Note that
$X_{0,n}(\tilde\omega)=\log(S^{\tilde\omega}(N_n,\alpha))$,
and moreover, submultiplicativity of $\Phi^\alpha$ gives
\begin{equation}\label{subadd}
X_{0,m}\le X_{0,n}+ X_{n,m}
\end{equation}  
for any $0<n<m$. Since 
\[
(N_{n+1}-N_n)\log(\underline\sigma^\alpha)\le X_{n,n+1}\le (N_{n+1}-N_n)
\log(\overline\sigma^\alpha M)
\]
and $N_1\circ\Xi^n=N_{n+1}-N_n$, combining the $\Xi$-invariance of $P$ and the 
assumption $\int_{\widetilde\Omega}N_1(\tilde\omega)\,dP(\tilde\omega)<\infty$,
gives that $X_{n,m}$ is $P$-integrable for all $n,m\in\mathbb N\cup\{0\}$ with 
$n<m$. From Kingman's subadditive ergodic theorem (see for example Durrett 
\cite{D91}) it follows that the limit
\[
\lim_{n\to\infty}\frac{\log(S^{\tilde\omega}(N_n,\alpha))}{n}
  =:\tilde p^{\tilde\omega}(\alpha)
\]
exists for $P$-almost all $\tilde\omega\in\widetilde\Omega$.
Furthermore, $\tilde p^{\tilde\omega}$ is 
$P$-almost surely independent of $\tilde\omega$ by ergodicity of $P$. 
Observe that in the definition of $\tilde p^{\tilde\omega}$ there is a neck level
$N_n$ in the numerator whilst in the denominator we have $n$.
Birkhoff's ergodic theorem implies that the finite non-random limit
\[
\lim_{n\to\infty}\frac{N_n}n=\lim_{n\to\infty}\frac 1n
  \sum_{k=0}^{n-1}N_1\circ\Xi^k=b\ge 1
\]
exists for $P$-almost all $\tilde\omega\in\widetilde\Omega$. In particular, 
\begin{equation}\label{smallstep}
\lim_{n\to\infty}\frac{N_{n+1}-N_n}n=0=\lim_{n\to\infty}\frac{N_{n+1}-N_n}{N_n},
\end{equation} 
and therefore, the limit 
\[
\lim_{n\to\infty}\frac{\log(S^{\tilde\omega}(N_n,\alpha))}{N_n}
  =\frac{\tilde p^{\tilde\omega}(\alpha)}{b}
\]
exists for $P$-almost all $\tilde\omega\in\widetilde\Omega$.

For any $m\in\mathbb N$, let $l(m)$ be a random integer such that 
$N_{l(m)}\leq m< N_{l(m)+1}$. Since $l(m)\le m$ and
\[
(\underline\sigma^\alpha)^{N_{l(m)+1}- N_{l(m)}}S^{\tilde\omega}(N_{l(m)},\alpha)
  \leq S^{\tilde\omega}(m,\alpha)
  \leq (\overline\sigma^\alpha M)^{N_{l(m)+1}-N_{l(m)}}
    S^{\tilde\omega}(N_{l(m)},\alpha),
\]
equation \eqref{smallstep} implies that the pressure
\begin{equation}\label{uniquepressure}
\lim_{m\to\infty}\frac{\log(S^{\tilde\omega}(m,\alpha))}m
  =p^{\tilde\omega}(\alpha)=\frac{\tilde p^{\tilde\omega}(\alpha)}{b}
\end{equation}
exists for $P$-almost all $\tilde\omega\in\widetilde\Omega$. By ergodicity, 
$p^{\tilde\omega}(\alpha)$ is  $P$-almost surely independent of 
$\tilde\omega\in\widetilde\Omega$. 

We continue by verifying that
for $P$-almost all $\tilde\omega\in\widetilde\Omega$ the pressure 
$p^{\tilde\omega}(\alpha)$ exists for all $\alpha\in[0,\infty[$, it is strictly 
decreasing in $\alpha$ and it has a unique zero at $\alpha_0$.
It follows from Fubini's theorem that for 
$P$-almost all $\tilde\omega\in\widetilde\Omega$ the pressure 
$p^{\tilde\omega}(\alpha)$ exists for $\mathcal L$-almost all 
$\alpha\in [0,\infty[$. Consider such $\tilde\omega\in\widetilde\Omega$. 
Using the
properties of the singular value function, we obtain for all 
$\alpha\in [0,\infty[$, for all $\delta>0$ and for all $m\in\mathbb N$
\[
\underline\sigma^{m\delta}\leq\frac{S^{\tilde\omega}(m,\alpha+\delta)}
  {S^{\tilde\omega}(m,\alpha)}\leq\overline\sigma^{m\delta}.
\]
Hence both
\[
\liminf_{n\to\infty}\frac{\log S^{\tilde\omega}(n,\alpha)}{n}\, \text{ and }\,
 \limsup_{n\to\infty}\frac{\log S^{\tilde\omega}(n,\alpha)}{n}
\]
are strictly decreasing and continuous in $\alpha$. Since for 
$\mathcal L$-almost all $\alpha\in[0,\infty[$ the pressure
$p^{\tilde\omega}(\alpha)$ exists, it follows 
that the limit exists for all $\alpha\in[0,\infty[$ and is continuous and  
strictly decreasing in $\alpha$. Finally, the fact that for all $n\in\mathbb N$ 
\[
\frac{\log S^{\tilde\omega}(n,0)}n\geq 0\, \text{ and }
 \lim_{\alpha\to\infty}\lim_{n\to\infty}\frac{\log S^{\tilde\omega}(n,\alpha)}n
 =-\infty 
\]
yields the existence of a unique $\alpha_0$ such that 
$p^{\tilde\omega}(\alpha_0)=0$. This completes the proof. 
\end{proof}

\begin{remark}\label{Julgubben}
By \eqref{uniquepressure}, the functions $\tilde p^{\tilde\omega}$ and
$p^{\tilde\omega}$ have the same unique zero for $P$-almost all 
$\tilde\omega\in\widetilde\Omega$.
\end{remark}

\begin{example}\label{assumpsat}
(a) The definition of $\widetilde\Omega$ was inspired by the construction of
random V-variable fractals introduced by Barnsley et al.\ (see 
\cite{BHS2005,BHS2008,BHS08}). Let $V \geq 1$ be an integer.
A code tree is said to be $V$-variable if there are at most $V$ 
distinct sub code trees at each level, and a code tree fractal is $V$-variable 
if the 
code tree is $V$-variable. The attractors of ordinary iterated function systems
are $1$-variable fractals also known as homogeneous fractals. In random 
V-variable code trees the distribution of the spacing between necks has
exponentially decreasing tails implying that $N_1$ has a finite expectation.
In particular, the probability measure related to random V-variable fractals 
introduced by Barnsley et al. satisfies the assumptions of 
Theorem~\ref{pexists}.

It was shown in \cite{BHS08} that, under the assumption that all maps are 
similarities, 
$S^{\tilde\omega}(n,\alpha)$ can be expressed in terms of products of
random $V\times V$-matrices and, using the law of large numbers, one obtains
\[
p^{\tilde\omega}(\alpha)=\int_{\widetilde\Omega}\log\bigl(\sum_{\bi_{N_1}\in
  \Sigma_*^{\tilde\omega}
  (0,1)}(r_{i_1}^{\tilde\omega(\emptyset)}r_{i_2}^{\tilde\omega(i_1)}\cdots 
  r_{i_{N_1}}^{\tilde\omega(i_1\cdots i_{{N_1}-1})})^\alpha\bigr)\,dP(\tilde\omega),
\]
where $r_i^\lambda$ is the similarity ratio of $f_i^\lambda$. This
property is useful when estimating the 
pressure and the dimension numerically. 
In general, it is hard to give good numerical 
estimates for the pressure when 
$\int_{\widetilde\Omega}N_1(\tilde\omega)\,dP(\tilde\omega)$ is large.

(b) Another example of systems satisfying the assumptions of 
Theorem~\ref{pexists} are random Markov homogeneous fractals.
Suppose that $\Lambda=\{1,\dots,\ell\}$ is finite and $Q$ is an ergodic Markov
transition matrix on $\Lambda$, that is, $Q$ is an $\ell\times\ell$-matrix 
with non-negative elements and row-sums equal to 1. Let $P_0$ be a probability 
measure on $\Lambda$. 

A probability measure on the set of code trees 
$\Omega$ is generated in the following way: Suppose 
$\omega(\emptyset)=\omega_0$ is chosen 
according to $P_0$. Next choose $\omega_1$ according to the probability 
measure defined by the row $\omega_0$ in $Q$, that is, 
$\omega_1=j$ with probability $Q_{\omega_0 j}$. Further, let 
$\omega(i)=\omega_1$ for all $i=1,\dots,M_{\omega(\emptyset)}$. This defines a 
measure on $\Omega$ up to level 1. We continue inductively in the same 
manner, that is, $\omega_{k+1}=j$ with probability $Q_{\omega_k j}$ and
$\omega(i_1\cdots i_kl)=\omega_{k+1}$ for all 
$l=1,\dots,M_{\omega(i_1\dots i_k)}$. This defines a probability measure $P$ on 
$\Omega$. 

Every realization of the above construction is spatially homogeneous since
at every level all the sub code trees are identical. By ergodicity $P$ has a 
self-repeating time structure in the following sense: Let $N_i(\omega)$ be the 
smallest level $k>N_{i-1}(\omega)$ such that $\omega_{N_i(\omega)}=\omega_0$, 
where $N_0(\omega)=0$. The sequence $(N_i)_{i\in\mathbb N}$ will define  
regenerative time points in the sense that the probability measure $P$ on 
$\Omega$ will have self-repeating structure at these random times. 
\end{example}

\section{Dimensions of random code tree fractals}\label{dimension}

In this section we show that for $P$-almost all 
$\tilde\omega\in\widetilde\Omega$ the zero of 
the pressure is equal to
the Hausdorff dimension of 
$A_\ba^{\tilde\omega}$ for $\mathcal L^{D\mathcal A}$-almost all 
$\ba\in\mathbb R^{D\mathcal A}$. For this purpose, we restrict our consideration
to the case $D=2$ and impose additional assumptions on the probability measure 
$P$. For a discussion concerning the assumptions of the following theorem,
see Remark~\ref{assumptions}. 

\begin{theorem}\label{main}
Let $D=2$ and $0<\underline\sigma\le\overline\sigma<\frac 12$. Assume that
$P$ is an ergodic $\Xi$-invariant probability measure on $\widetilde\Omega$.
Further, suppose that 
\begin{equation}\label{direction}
\begin{aligned}
P\bigl(\{\tilde\omega\in\widetilde\Omega \mid & \text{ there exists }
 v\in\mathbb R^2\setminus\{0\}\text{ such that }T_{\bi_{N_1}}^{\tilde\omega}(v)\\
&\text{ are parallel for all }\bi_{N_1}\in \Sigma_*^{\tilde\omega}(0,1)\}\bigr)
  <1,
\end{aligned}
\end{equation}
\begin{equation}\label{decay}
\sum_{l=1}^\infty P\bigl(\{\tilde\omega\in\widetilde\Omega\mid N_1(\tilde\omega)
  \ge\frac{c_0l}{\log l}\}\bigr)<\infty\text{ for all }c_0>0
\end{equation}
and the $\sigma$-algebras generated by $\{\tilde\omega(\tau)\mid |\tau|<N_m\}$
and  $\{\tilde\omega(\tau)\mid |\tau|\ge N_m\}$ are independent for all 
$m\in\mathbb N$. Then for $P$-almost all $\tilde\omega\in\widetilde\Omega$
\[
\dimH(A_{\ba}^{\tilde\omega})=\min\{\alpha_0,2\}
\]
for $\mathcal L^{2\mathcal A}$-almost all $\ba\in\mathbb R^{2\mathcal A}$, where
$\alpha_0$ is the zero of the pressure given in Theorem~\ref{pexists}.
\end{theorem}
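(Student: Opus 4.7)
My plan is to derive Theorem~\ref{main} from Theorem~\ref{0510} by establishing that $d^{\tilde\omega}=\alpha_0$ for $P$-almost every $\tilde\omega\in\widetilde\Omega$. Once this is in hand, Theorem~\ref{0510} applied to each such $\tilde\omega$ yields $\dimH(A_{\ba}^{\tilde\omega})=\min\{2,d^{\tilde\omega}\}=\min\{2,\alpha_0\}$ for $\mathcal L^{2\mathcal A}$-almost every $\ba$, which is exactly the desired conclusion. The inequality $d^{\tilde\omega}\le\alpha_0$ is the easy half: if $\alpha>\alpha_0$, then by Theorem~\ref{pexists} the pressure $p^{\tilde\omega}(\alpha)$ is strictly negative, so $S^{\tilde\omega}(k,\alpha)\to 0$ exponentially and covering $\Sigma^{\tilde\omega}$ by its level-$k$ cylinders immediately gives $M^\alpha(\Sigma^{\tilde\omega})=0$.

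For the reverse inequality, I would build, for every $\alpha<\min\{2,\alpha_0\}$, a probability measure $\mu^{\tilde\omega}$ on $\Sigma^{\tilde\omega}$ together with a finite random constant $c(\tilde\omega)$ such that
\[
\mu^{\tilde\omega}([\bi_k])\le c(\tilde\omega)\,\Phi^\alpha(T_{\bi_k}^{\tilde\omega})
\]
for every cylinder $[\bi_k]$. The mass distribution argument of \cite[Lemma~4.2]{F88} then yields $d^{\tilde\omega}\ge\alpha$, and letting $\alpha\uparrow\min\{2,\alpha_0\}$ finishes the reduction. The measure $\mu^{\tilde\omega}$ is naturally built on the neck skeleton: the independence of the $\sigma$-algebras across necks combined with $\Xi$-invariance makes the blockwise contributions $\Phi^\alpha\bigl(T_{\bi_{N_m}}^{\tilde\omega}(T_{\bi_{N_{m-1}}}^{\tilde\omega})^{-1}\bigr)$ a stationary ergodic sequence whose logarithmic growth rate is $\tilde p^{\tilde\omega}(\alpha)$, so distributing mass at each neck level proportionally to these block weights produces a product-type measure whose total weight is controlled. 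The decay condition \eqref{decay} together with a Borel--Cantelli argument gives $N_m-N_{m-1}=o(m/\log m)$ almost surely, which is fine enough to interpolate the cylinder estimate from neck levels down to arbitrary intermediate levels with only a subexponential loss that can be absorbed into $c(\tilde\omega)$.

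The main obstacle, and the place where \eqref{direction} is indispensable, is that $\Phi^\alpha$ is only submultiplicative, so the global Frostman bound ultimately reduces to proving a near-multiplicativity statement of the form
\[
\Phi^\alpha(T_{\bi_{N_m}}^{\tilde\omega})\ge e^{-o(m)}\prod_{j=1}^{m}\Phi^\alpha\bigl(T_{\bi_{N_j}}^{\tilde\omega}(T_{\bi_{N_{j-1}}}^{\tilde\omega})^{-1}\bigr)
\]
$P$-almost surely along the random matrix cocycle driving the construction. In $\mathbb R^2$ this can fail only when the long stable directions of consecutive block products align, which is exactly the degenerate configuration ruled out by \eqref{direction}. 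Combined with the blockwise independence of the $\sigma$-algebras, \eqref{direction} guarantees that the induced cocycle has two distinct Lyapunov exponents and no almost-sure invariant line, and a Furstenberg--Oseledets positivity argument then produces the required near-multiplicativity. I expect this random-matrix step to be the technical core of the proof.
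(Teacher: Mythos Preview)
Your overall strategy---reduce to Theorem~\ref{0510} by constructing, for each $\alpha<\alpha_0$, a probability measure $\mu^{\tilde\omega}$ on $\Sigma^{\tilde\omega}$ satisfying the Frostman bound $\mu^{\tilde\omega}([\bi_k])\le c(\tilde\omega)\Phi^\alpha(T_{\bi_k}^{\tilde\omega})$---is exactly what the paper does, and your treatment of the easy inequality $d^{\tilde\omega}\le\alpha_0$ is fine. The gap is in how you propose to build $\mu^{\tilde\omega}$ and verify the bound.

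With your product-type measure (mass on $[\bi_{N_m}]$ proportional to $\prod_{j}\Phi^\alpha$ of the $j$-th block), the Frostman inequality becomes precisely your displayed \emph{pointwise} near-multiplicativity, which must hold for \emph{every} cylinder, hence for every path $\bi$. Assumption~\eqref{direction} only says that not \emph{all} first-block maps send some vector to parallel images; it does not exclude the existence of \emph{some} path whose successive blocks align badly at every step, and along such a path $\Phi^\alpha(T_{\bi_{N_m}}^{\tilde\omega})$ can be exponentially smaller than the product of the block values. A Furstenberg--Oseledets argument yields statements for paths that are typical with respect to some reference measure, not for all paths, so it cannot close this gap. (There is also no single linear cocycle over $(\widetilde\Omega,\Xi,P)$ here: at each neck step one has an entire family $\{T_{\bi_{N_1}}^{\tilde\omega}\}$ of matrices, so ``two distinct Lyapunov exponents and no invariant line'' needs a further randomization over paths before it even makes sense.)

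The paper avoids this by taking a different measure and proving a \emph{summed} rather than pointwise estimate. One sets $\mu_m^{\tilde\omega}$ to have mass on $[\bi_{N_m}]$ proportional to the \emph{global} weight $\Phi^\alpha(T_{\bi_{N_m}}^{\tilde\omega})$ and passes to a weak limit. The Frostman bound then reduces to
\[
\sum_{\bi_{N_n}}\sum_{\bk_{N_m}}\Phi^\alpha\bigl(T_{\bi_{N_n}}^{\tilde\omega}T_{\bk_{N_m}}^{\Xi^n(\tilde\omega)}\bigr)\ \ge\ \tilde c(\tilde\omega)\sum_{\bi_{N_n}}\Phi^\alpha(T_{\bi_{N_n}}^{\tilde\omega})\sum_{\bk_{N_m}}\Phi^\alpha(T_{\bk_{N_m}}^{\Xi^n(\tilde\omega)}),
\]
which only requires that a $\Phi^\alpha$-weighted \emph{majority} of $\bi_{N_n}$ have $\widehat w_1(T_{\bk_{N_m}}^{\Xi^n(\tilde\omega)})$ not too close to $v_2(T_{\bi_{N_n}}^{\tilde\omega})$. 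The paper shows directly that if this failed for infinitely many $n$, then (using \eqref{decay} to control $N_n-N_{n-h}$ for $h\approx C\log n$) one could exhibit a single vector mapped nearly parallel by all maps in each of $h$ consecutive blocks; by \eqref{direction} and the independence hypothesis this event has probability at most $\eta^{h}$ with $\eta<1$, and Borel--Cantelli finishes. No Oseledets-type input is used.
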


\begin{proof}
Theorem~\ref{main} is proved as a consequence of a sequence of lemmas.
Combining Theorem~\ref{0510} with the fact that $d^{\tilde\omega}\le\alpha_0$
for $P$-almost all $\tilde\omega\in\widetilde\Omega$,
implies $\dimH(A_\ba^{\tilde\omega})\le\alpha_0$ and therefore, it suffices to 
verify that for $P$-almost all $\tilde\omega\in\widetilde\Omega$,
$\alpha_0\le\dimH(A_\ba^{\tilde\omega})$ for $\mathcal L^{2\mathcal A}$-almost all
$\ba\in\mathbb R^{2\mathcal A}$. For this purpose we
construct for all $\alpha<\alpha_0$ and for $P$-almost all 
$\tilde\omega\in\widetilde\Omega$ a probability measure $\mu^{\tilde\omega}$ 
on $\Sigma^{\tilde\omega}$ such that for some constant $c(\tilde\omega)$
\begin{equation}\label{eq66}
\mu^{\tilde\omega}([\bi_l])
\leq c(\tilde\omega)\Phi^\alpha(T_{\bi_l}^{\tilde\omega}) 
\end{equation}
for all $l\in\mathbb N$ (recall \eqref{0507} and the proof of 
Theorem~\ref{0510}). Observe that inequality \eqref{0507} is valid only for 
$\alpha<d^{\tilde\omega}$.
For $\tilde\omega\in\widetilde\Omega$ and $\alpha<\alpha_0$ the 
measure $\mu^{\tilde\omega}$ on $\Sigma^{\tilde\omega}$ is defined 
in the following way: Let $m\in\mathbb N$ and set
\begin{equation}\label{muomega}
\mu_m^{\tilde\omega} = \frac{\sum_{\bi_{N_m}\in\Sigma_*^{\tilde\omega}(0,m)}
   \Phi^\alpha(T_{\bi_{N_m}}^{\tilde\omega})\delta_{\bi_{N_m}}}{\sum_{\bi_{N_m}
   \in\Sigma_*^{\tilde\omega}(0,m)}
   \Phi^\alpha(T_{\bi_{N_m}}^{\tilde\omega})},
\end{equation}
where $\delta_{\bi_{N_m}}$ is the Dirac measure at a fixed point of the cylinder 
$[\bi_{N_m}]$. The choice of the cylinder point plays no role in what follows.
Since $\Sigma^{\tilde\omega}$ is compact, the sequence 
$(\mu_m^{\tilde\omega})_{m\in\mathbb N}$ has a converging 
subsequence with a limit measure $\mu^{\tilde\omega}$.
Observe that the converging subsequence 
may depend on $\tilde\omega\in\widetilde\Omega$.

By submultiplicativity of $\Phi^\alpha$, we have for all $n,m\in\mathbb N$ and
$N_{n-1}\le l<N_n$ that
\begin{equation}\label{measurebound}
\mu_{n+m}^{\tilde\omega}([\bi_l])
  \le\frac{\Phi^\alpha(T_{\bi_l}^{\tilde\omega})\sum_{\bj,\bi_l\bj\in
  \Sigma_*^{\tilde\omega}(0,n)}\sum_{\bk_{N_m}\in\Sigma_*^{\tilde\omega}
  (n,n+m)}\Phi^\alpha(T_{(\bi_l)\bj}^{\tilde\omega})
  \Phi^\alpha(T_{\bk_{N_m}}^{\Xi^n(\tilde\omega)})}   
  {\sum_{\bi_{N_n}\in\Sigma_*^{\tilde\omega}(0,n)}
  \sum_{\bk_{N_m}\in\Sigma_*^{\tilde\omega}(n,n+m)}\Phi^\alpha
  (T_{\bi_{N_n}}^{\tilde\omega}T_{\bk_{N_m}}^{\Xi^n(\tilde\omega)})},
\end{equation}
where the last $|\bj|$ maps of $T_{\bi_l\bj}^{\tilde\omega}$ are denoted by
$T_{(\bi_l)\bj}^{\tilde\omega}$. In order to prove \eqref{eq66},
we need to estimate the denominator of \eqref{measurebound} from below by 
\begin{equation}\label{denominator}
\tilde c(\tilde\omega)\sum_{\bi_{N_n}\in\Sigma_*^{\tilde\omega}(0,n)}
  \sum_{\bk_{N_m}\in\Sigma_*^{\tilde\omega}(n,n+m)}\Phi^\alpha
  (T_{\bi_{N_n}}^{\tilde\omega})\Phi^\alpha(T_{\bk_{N_m}}^{\Xi^n(\tilde\omega)})
\end{equation}
for some $\tilde c(\tilde\omega)>0$. 

The verification of the lower bound \eqref{denominator} is divided into three
different steps. The first two steps are Lemmas~\ref{poslowpressure} 
and \ref{notaligned} in which the assumptions \eqref{direction}, 
\eqref{decay} and the independence are not needed. However, the assumptions of 
Theorem~\ref{pexists} have to be valid. 

Since $D=2$, it is enough to consider
$0\le\alpha\le 2$. Letting $T,U:\mathbb R^2\to\mathbb R^2$ be linear maps, set
\[
\underline\Phi^\alpha(T)=
 \begin{cases}\sigma_2(T)^\alpha,&\text{ if }\alpha\le 1\\
              \sigma_2(T)\sigma_1(T)^{\alpha-1},&\text{ if }\alpha>1.
 \end{cases}
\]

\begin{lemma}\label{philowerbound}
Let $T,U:\mathbb R^2\to\mathbb R^2$ be  linear maps and $0\le\alpha\le 2$. Then
\[
\Phi^\alpha(TU)\ge\underline\Phi^\alpha(T)\Phi^\alpha(U).
\]
\end{lemma}

\begin{proof}
The fact that $\sigma_1(TU)\ge\sigma_2(T)\sigma_1(U)$ gives the claim  
for $\alpha\le 1$. If $\alpha>1$, write $\beta=2-\alpha$. Since 
$\det(T)=\sigma_1(T)\sigma_2(T)$ and $\sigma_1(T^{-1})=\sigma_2(T)^{-1}$, 
we obtain
\begin{equation}\label{inverse}
\Phi^\alpha(T)=\sigma_1(T^{-1})^\beta\det(T)\text{ and }
\underline\Phi^\alpha(T)=\sigma_2(T^{-1})^\beta\det(T),
\end{equation}
implying
\begin{equation*}
\begin{aligned}
\Phi^\alpha(TU)=\sigma_1(U^{-1}T^{-1})^\beta\det(TU)
   &\ge\sigma_2(T^{-1})^\beta\det(T)\sigma_1(U^{-1})^\beta\det(U)\\
   &=\underline\Phi^\alpha(T)\Phi^\alpha(U).
\end{aligned}
\end{equation*}
\end{proof}

\begin{remark}\label{lowerpressure}
Observation \eqref{inverse} implies that $\underline\Phi^\alpha$ is 
supermultiplicative, that is, 
$\underline\Phi^\alpha(TU)\ge\underline\Phi^\alpha(T)\underline\Phi^\alpha(U)$
for all linear maps $T,U\colon\mathbb R^2\to\mathbb R^2$. Similarly
as in the proof of Theorem~\ref{pexists} this in turn implies that for 
$P$-almost all 
$\tilde\omega\in\widetilde\Omega$ the limit
\[
\lim_{n\to\infty}\frac 1n\log\sum_{\bi_{N_n}\in\Sigma_*^{\tilde\omega}(0,n)}
  \underline\Phi^\alpha(T_{\bi_{N_n}}^{\tilde\omega})
  =:\underline{\tilde p}^{\tilde\omega}(\alpha)\le\tilde p^{\tilde\omega}(\alpha)
\]
exists for all $\alpha\in[0,2]$.
\end{remark}

\begin{lemma}\label{poslowpressure}
Inequality \eqref{eq66} holds for $P$-almost all 
$\tilde\omega\in\widetilde\Omega$ for which 
$\underline{\tilde p}^{\tilde\omega}(\alpha)>0$.
\end{lemma}

\begin{proof}
Let $\tilde\omega\in\widetilde\Omega$ satisfy 
$\underline{\tilde p}^{\tilde\omega}(\alpha)>0$ and \eqref{smallstep}.  
There exists $\zeta>1$ such that for all sufficiently large $n\in\mathbb N$ 
\[
\sum_{\bi_{N_n}\in\Sigma_*^{\tilde\omega}(0,n)}
\underline\Phi^\alpha(T_{\bi_{N_n}}^{\tilde\omega})\ge\zeta^n.
\]
Hence, inequality \eqref{measurebound}, Lemma~\ref{philowerbound} and 
the fact $\overline\sigma<1$ yield 
\[
\mu_{n+m}^{\tilde\omega}([\bi_l])
 \le M^{N_n-N_{n-1}}\zeta^{-n}\Phi^\alpha(T_{\bi_l}^{\tilde\omega})
\]
for all sufficiently large 
$n\in\mathbb N$, for all $m\in\mathbb N$ and for all $N_{n-1}\le l<N_n$.
By \eqref{smallstep}, for every $\varepsilon>0$ there exists
$n_\varepsilon$ such that $N_n-N_{n-1}\le\varepsilon n$ 
for all $n\ge n_\varepsilon$. This implies the existence of a constant 
$c(\tilde\omega)<\infty$ such that 
\[
\mu_{n+m}^{\tilde\omega}([\bi_l])
  \le c(\tilde\omega)\Phi^\alpha(T_{\bi_l}^{\tilde\omega})
\]
for all $n,m\in\mathbb N$ and $N_{n-1}\le l<N_n$. Since $[\bi_l]$ is open, 
we have
$\mu^{\tilde\omega}([\bi_l])\le\liminf_{k\to\infty}
  \mu_{n+m_k}^{\tilde\omega}([\bi_l])$, where 
$(\mu_{n+m_k})$ is the subsequence converging to $\mu^{\tilde\omega}$. 
This completes the proof.
\end{proof}

For the remaining two cases of the proof of inequality \eqref{eq66}, we need 
the following notation. Let 
$T\colon\mathbb R^2\to\mathbb R^2$ be a linear map.
If $\sigma_1(T)>\sigma_2(T)$ there exist unique (up to sign) unit vectors 
$v_1(T)$ and $v_2(T)$ such that $|T(v_i)|=\sigma_i$ for $i=1,2$. Set 
$w_i(T)=T(v_i)$ for $i=1,2$. Observe that $v_1(T)$ and
$v_2(T)$ are perpendicular to each other since they are eigenvectors of the
self-adjoint map $T^*T$, and $w_1(T)$ and $w_2(T)$ are also perpendicular 
as the semi-axes of an ellipse. If $\sigma_1(T)=\sigma_2(T)$, let 
$v_1(T)$ and $v_2(T)$ be any pair of perpendicular unit vectors. Given two  
linear maps $T,U\colon\mathbb R^2\to\mathbb R^2$, write 
\[
\widehat w_1(U):=\frac{w_1(U)}{|w_1(U)|}=av_1(T)+bv_2(T),
\]
where $a$ and $b$ depend on $T$ and $U$. Then 
\[
\sigma_1(TU)\ge|T(w_1(U))|\ge |a|\sigma_1(T)\sigma_1(U)
\] 
and the fact
$\sigma_2(TU)\ge\sigma_2(T)\sigma_2(U)$ gives
\begin{equation}\label{phibound}
\Phi^\alpha(TU)\ge |a|^\alpha\Phi^\alpha(T)\Phi^\alpha(U).
\end{equation}

Since $\alpha<\alpha_0$, we have $\tilde p^{\tilde\omega}(\alpha)>0$, and by 
Lemma~\ref{poslowpressure}, it remains to consider the case 
$\underline{\tilde p}^{\tilde\omega}(\alpha)<\tilde p^{\tilde\omega}(\alpha)$. 
For every such $\tilde\omega\in\widetilde\Omega$, there exist $\lambda>1$, 
$0<\rho<\lambda$, $0<\gamma<1$ and $n_0\in\mathbb N$ such that 
\begin{equation}\label{n0bound}
\sum_{\bi_{N_n}\in\Sigma_*^{\tilde\omega}(0,n)}\Phi^\alpha
  (T_{\bi_{N_n}}^{\tilde\omega})>\lambda^n\text{ and }
\sum_{\bi_{N_n}\in\Sigma_*^{\tilde\omega}(0,n)}\underline\Phi^\alpha
  (T_{\bi_{N_n}}^{\tilde\omega})<(\gamma\rho)^n
\end{equation}
for all $n\ge n_0$. Since the set
\[
\widetilde\Omega^<=\{\tilde\omega\in\widetilde\Omega\mid
  \underline{\tilde p}^{\tilde\omega}(\alpha)<\tilde p^{\tilde\omega}(\alpha)\}
\]
can be represented as a countable union of sets consisting of points which
satisfy \eqref{n0bound} with rational 
$\lambda$, $\rho$ and $\gamma$, we may fix $\lambda>1$, 
$0<\rho<\lambda$ and $0<\gamma<1$ for the rest of the proof. We denote the 
corresponding set by $\widetilde\Omega_{\lambda,\rho,\gamma}^<$,
and  let
\[
E_{n_0}=\{\tilde\omega\in\widetilde\Omega_{\lambda,\rho,\gamma}^<\mid
  \eqref{n0bound}\text{ is valid for all }n\ge n_0\}.
\] 
Since 
$\bigcup_{n_0=1}^\infty E_{n_0}=\widetilde\Omega_{\lambda,\rho,\gamma}^<$, 
we may also fix $n_0\in\mathbb N$ for the rest of the proof.

Let $1<\lambda_1<\lambda$ and $\frac{\lambda_1}\lambda<\gamma_1<1$. 
For all
$n,m\in\mathbb N$, $\bi_{N_n}\in\Sigma_*^{\tilde\omega}(0,n)$ and 
$\bk_{N_m}\in\Sigma_*^{\tilde\omega}(n,n+m)$, let 
\begin{equation}\label{w1decomb}
\widehat w_1(T_{\bk_{N_m}}^{\Xi^n(\tilde\omega)})
  =av_1(T_{\bi_{N_n}}^{\tilde\omega})+bv_2(T_{\bi_{N_n}}^{\tilde\omega}),
\end{equation}
where $a$ and $b$ are functions of $\bi_{N_n}$ and $\bk_{N_m}$, and define
\[
C_n^{\tilde\omega}(\bk_{N_m})=\{\bi_{N_n}\in\Sigma_*^{\tilde\omega}(0,n)\mid 
  |a|>\lambda_1^{-\frac n\alpha}\}.
\]

\begin{lemma}\label{notaligned}
Let $\underline{\tilde p}^{\tilde\omega}(\alpha)<\tilde p^{\tilde\omega}(\alpha)$.
Assuming that for all large $n,m\in\mathbb N$ we have
\begin{equation}\label{bigportion}
\sum_{\bi_{N_n}\in C_n^{\tilde\omega}(\bk_{N_m})}\Phi^\alpha
  (T_{\bi_{N_n}}^{\tilde\omega})>\gamma_1^n
  \sum_{\bi_{N_n}\in\Sigma_*^{\tilde\omega}(0,n)}
  \Phi^\alpha(T_{\bi_{N_n}}^{\tilde\omega})
\end{equation}
for all $\bk_{N_m}\in\Sigma_*^{\tilde\omega}(n,n+m)$, inequality 
\eqref{eq66} is valid.
\end{lemma}

\begin{proof}
Let $n\ge n_0$ and $m\in\mathbb N$ be so large that inequality 
\eqref{bigportion} holds. 
Consider $N_{n-1}\le l<N_n$.
From \eqref{measurebound},
 \eqref{phibound}, assumption \eqref{bigportion} and
the first inequality in \eqref{n0bound}
we get
\[
\mu_{n+m}^{\tilde\omega}([\bi_l])\le\bigl(\frac{\lambda_1}\lambda\bigr)^n
  M^{N_n-N_{n-1}}\gamma_1^{-n}
  \Phi^\alpha(T_{\bi_l}^{\tilde\omega}).
\]
Proceeding  as in the proof of
Lemma~\ref{poslowpressure}, it follows that \eqref{eq66} holds.
  
\end{proof}

Below we will prove that the assumption in \eqref{bigportion} holds for 
$P$-almost all ${\tilde\omega} \in \widetilde\Omega^<$. 
The heuristic idea of the proof is as follows: If the assumption in 
\eqref{bigportion} is false,
then the longer semiaxes of most of the ellipses
$T_{\bk_{N_m}}^{\Xi^n(\tilde\omega)}(B(0,1))$ are almost 
parallel to $v_2(T_{\bi_{N_n}}^{\tilde\omega})$ for most $\bi_{N_n}$. In 
particular, most of $v_2(T_{\bi_{N_n}}^{\tilde\omega})$ are nearly parallel to
each other. By assumption \eqref{decay}, the distance $N_n-N_{n-1}$ is small 
compared to $n$, and therefore, $v_2(T_{\bi_{N_{n-1}}}^{\tilde\omega})$ is roughly
parallel to the image of $v_2(T_{\bi_{N_n}}^{\tilde\omega})$ under
$T_{\bi_{N_1}}^{\Xi^{n-1}(\tilde\omega)}$. In this manner one 
finds a vector which is mapped in the same way by all the maps between the
levels $N_{n-1}$ and $N_n$, and by assumption \eqref{direction}, 
the probability of this event is smaller than $\eta<1$. 
Selecting an integer $h$ in a suitable manner and repeating
this argument in the blocks $[N_{n-h},N_{n-h+1}],\dots,[N_{n-1},N_n]$
one may attach a vector $v_t$ to each $[N_{n-t},N_{n-t+1}]$ such that $v_t$
is mapped in the same manner by all the maps between the
levels $N_{n-t}$ and $N_{n-t+1}$.
By independence, this happens with probability 
less than $\eta^h$, and the Borel-Cantelli lemma implies that this may
 happen only
for finitely many $n\in\mathbb N$.  

We will now make this heuristic idea precise.
Consider $c_0>0$. For $L\in\mathbb N$, denote by 
$\widetilde\Omega(L)$ the set of $\tilde\omega\in\widetilde\Omega$ for which
\begin{equation}\label{control}
N_{l+1}-N_l\le\frac{c_0l}{\log l} 
\end{equation}
for all $l\geq L$. Since $N_1\circ\Xi^n=N_{n+1}-N_n$ and $P$ is 
$\Xi$-invariant, 
assumption \eqref{decay} combined with Borel-Cantelli lemma implies that 
for $P$-almost all $\tilde\omega\in\widetilde\Omega$ there is an 
$L\in\mathbb N$ such that $\tilde\omega\in\widetilde\Omega(L)$. 
Fix $L\in\mathbb N$ for the rest of the proof.

By considering a decreasing sequence of events, it follows
from assumption \eqref{direction} that there exist $\delta>0$ and $\eta<1$ 
such that
\begin{equation}\label{parallel}
\begin{aligned}
P\bigl(\{\tilde\omega\in\widetilde\Omega\mid&\text{ there exists }
  v\in\mathbb R^2\setminus\{0\}\text{ such that }
\sphericalangle(T_{\bi_{N_1}}^{\tilde\omega}(v),T_{\bi'_{N_1}}^{\tilde\omega}(v))
 \le\delta\\
&\text{ for all }\bi_{N_1},\bi'_{N_1}\in\Sigma_*^{\tilde\omega}(0,1)\}
 \bigr)<\eta,
\end{aligned}
\end{equation}
where $\sphericalangle(v,u)$ is the angle between 
$u,v\in\mathbb R^2\setminus\{0\}$. For $n\in\mathbb N$, let 
\begin{equation*}
\begin{aligned}
H_n=\{\tilde\omega\in\widetilde\Omega\mid&\text{ there exists }
  v\in\mathbb R^2\setminus\{0\}\text{ such that }
\sphericalangle(T_{\bi_{N_1}}^{\Xi^{n-1}(\tilde\omega)}(v),
 T_{\bi'_{N_1}}^{\Xi^{n-1}(\tilde\omega)}(v))
 \le\delta\\
&\text{ for all }\bi_{N_1},\bi'_{N_1}\in\Sigma_*^{\tilde\omega}(n-1,n)\}.
\end{aligned}
\end{equation*}
Since $H_n=\Xi^{-(n-1)}(H_1)$, we have $P(H_n)<\eta$ for all $n\in\mathbb N$.
Furthermore, by assumption, the events $H_n$ and $H_m$ are independent 
for $n\ne m$. Observe
that $H_n$ is independent of $\gamma$, $\gamma_1$, $\lambda$, $\lambda_1$,
$\rho$, $n_0$ and $L$.

For all $n,m\in\mathbb N$ with $n\ge n_0$, set
\begin{equation*}
\begin{aligned}
G_{n,m}=\{\tilde\omega\in E_{n_0}\mid&\text{ there exists }\bk_{N_m}\in
  \Sigma_*^{\tilde\omega}(n,n+m)\text{ for which}\\
  &\text{ inequality \eqref{bigportion} is not valid}\},
\end{aligned}
\end{equation*}
and define $G_n=\bigcap_{k=1}^\infty\bigcup_{m=k}^\infty G_{n,m}$. Let
\[
B_n^{\tilde\omega}=\{\bi_{N_n}\in\Sigma_*^{\tilde\omega}(0,n)\mid
\frac{\sigma_2(T_{\bi_{N_n}}^{\tilde\omega})}{\sigma_1(T_{\bi_{N_n}}^{\tilde\omega})}
\ge\bigl(\frac\rho\lambda\bigr)^{\frac n\beta}\},
\]
where $\beta=\alpha$ for $\alpha\le 1$ and $\beta=2-\alpha$ for $\alpha>1$. 
From \eqref{n0bound} one obtains
\begin{equation}\label{exceptional}
\sum_{\bi_{N_n}\in B_n^{\tilde\omega}}\Phi^\alpha(T_{\bi_{N_n}}^{\tilde\omega})
<\gamma^n\sum_{\bi_{N_n}\in\Sigma_*^{\tilde\omega}(0,n)}
\Phi^\alpha(T_{\bi_{N_n}}^{\tilde\omega})
\end{equation}
for all $n\ge n_0$.
 
By Lemmas \ref{poslowpressure} and \ref{notaligned} it is enough to prove that
for $P$-almost all $\tilde\omega\in E_{n_0}$ the number of those $n\ge n_0$, for
which $\tilde\omega\in G_n$, is finite, that is, 
$P(\bigcap_{k=n_0}^\infty\bigcup_{n=k}^\infty G_n)=0$. Let $n\ge n_0$ and 
$\tilde\omega\in G_n$. Fix $m\in\mathbb N$ and 
$\bk_{N_m}\in\Sigma_*^{\tilde\omega}(n,n+m)$ for which inequality
\eqref{bigportion} is not valid. Choose $h\in\mathbb N$ with $h\le h_n$ where
$h_n=\lfloor C\log n\rfloor$ is the integer part of $C\log n$ for some large 
$C$ to be fixed later.
We claim that for large enough $n\in\mathbb N$ there is
$\bi_{N_{n-h}}\in\Sigma_*^{\tilde\omega}(0,n-h)\setminus B_{n-h}^{\tilde\omega}$ 
such that 
$\bi_{N_{n-h}}\bj_{N_h}\in\Sigma_*^{\tilde\omega}(0,n)\setminus 
  C_n^{\tilde\omega}(\bk_{N_m})$ 
for all $\bj_{N_h}\in\Sigma_*^{\tilde\omega}(n-h,n)$. Indeed, if for every 
$\bi_{N_{n-h}}\in\Sigma_*^{\tilde\omega}(0,n-h)\setminus B_{n-h}^{\tilde\omega}$ 
there exists $\bj_{N_h}\in\Sigma_*^{\tilde\omega}(n-h,n)$ such that 
$\bi_{N_{n-h}}\bj_{N_h}\in C_n^{\tilde\omega}(\bk_{N_m})$, inequalities
\eqref{exceptional} and \eqref{control} yield
\begin{equation}\label{allsame}
\begin{aligned}
\sum_{\bi_{N_n}\in C_n^{\tilde\omega}(\bk_{N_m})}\Phi^\alpha
 (&T_{\bi_{N_n}}^{\tilde\omega})
\ge\sum_{\bi_{N_{n-h}}\in\Sigma_*^{\tilde\omega}(0,n-h)\setminus 
 B_{n-h}^{\tilde\omega}}
 \Phi^\alpha(T_{\bi_{N_{n-h}}}^{\tilde\omega}T_{\bj_{N_h}}^{\Xi^{n-h}(\tilde\omega)})\\
&\ge(\underline\sigma^\alpha)^{N_n-N_{n-h}}(1-\gamma^{n-h})
 \sum_{\bi_{N_{n-h}}\in\Sigma_*^{\tilde\omega}(0,n-h)}
 \Phi^\alpha(T_{\bi_{N_{n-h}}}^{\tilde\omega})\\
&\ge\bigl(\frac{\underline\sigma^\alpha}{\overline\sigma^\alpha M}
  \bigr)^{hc_0\frac n{\log n}}(1-\gamma^{n-h})
  \sum_{\bi_{N_n}\in\Sigma_*^{\tilde\omega}(0,n)}\Phi^\alpha
  (T_{\bi_{N_n}}^{\tilde\omega}).
\end{aligned}
\end{equation}
Choosing sufficiently small $c_0>0$ and sufficiently large $n\in\mathbb N$
(recall $h\le C\log n$), this contradicts the assumption 
that inequality \eqref{bigportion} is not valid. 

Consider  
$\bi_{N_{n-h}}\in\Sigma_*^{\tilde\omega}(0,n-h)\setminus B_{n-h}^{\tilde\omega}$
and $\bj_{N_h}\in\Sigma_*^{\tilde\omega}(n-h,n)$ such that
$\bi_{N_{n-h}}\bj_{N_h}\in\Sigma_*^{\tilde\omega}(0,n)\setminus 
  C_n^{\tilde\omega}(\bk_{N_m})$.
For any unit vector $v\in S^1\subset\mathbb R^2$, write
\begin{equation}\label{v2decomb}
\frac{T_{\bj_{N_h}}^{\Xi^{n-h}(\tilde\omega)}(v)}
   {|T_{\bj_{N_h}}^{\Xi^{n-h}(\tilde\omega)}(v)|}
  =c(v)v_1(T_{\bi_{N_{n-h}}}^{\tilde\omega})
   +d(v)v_2(T_{\bi_{N_{n-h}}}^{\tilde\omega}).
\end{equation}
Defining 
$v_2^{-1}=\frac{(T_{\bj_{N_h}}^{\Xi^{n-h}(\tilde\omega)})^{-1}
  (v_2(T_{\bi_{N_{n-h}}}^{\tilde\omega}))}
  {|(T_{\bj_{N_h}}^{\Xi^{n-h}(\tilde\omega)})^{-1}
  (v_2(T_{\bi_{N_{n-h}}}^{\tilde\omega}))|}\in S^1$, 
we observe that
\begin{equation*}
\begin{aligned}
|c(v_2&(T_{\bi_{N_{n-h}}\bj_{N_h}}^{\tilde\omega}))|\,\,\sigma_1
  (T_{\bi_{N_{n-h}}}^{\tilde\omega})\,\,|T_{\bj_{N_h}}^{\Xi^{n-h}(\tilde\omega)}
  (v_2(T_{\bi_{N_{n-h}}\bj_{N_h}}^{\tilde\omega}))|\\
&\le |T_{\bi_{N_{n-h}}\bj_{N_h}}^{\tilde\omega}
  (v_2(T_{\bi_{N_{n-h}}\bj_{N_h}}^{\tilde\omega}))|
  \le |T_{\bi_{N_{n-h}}\bj_{N_h}}^{\tilde\omega}(v_2^{-1})|
=\sigma_2(T_{\bi_{N_{n-h}}}^{\tilde\omega})
  |T_{\bj_{N_h}}^{\Xi^{n-h}(\tilde\omega)}(v_2^{-1})|.
\end{aligned}
\end{equation*}
Since $\bi_{N_{n-h}}\not\in B_{n-h}^{\tilde\omega}$, we have for all 
$n\ge L-h$
\begin{equation}\label{anglebound}
\begin{aligned}
|c(v_2(T_{\bi_{N_{n-h}}\bj_{N_h}}^{\tilde\omega}))|
 &\le\frac{\sigma_2(T_{\bi_{N_{n-h}}}^{\tilde\omega})} {\sigma_1
   (T_{\bi_{N_{n-h}}}^{\tilde\omega})}\frac{|T_{\bj_{N_h}}^{\Xi^{n-h}(\tilde\omega)}
   (v_2^{-1})|}{|T_{\bj_{N_h}}^{\Xi^{n-h}(\tilde\omega)}
   (v_2(T_{\bi_{N_{n-h}}\bj_{N_h}}^{\tilde\omega}))|}\\
 &<\bigl(\frac\rho\lambda\bigr)^{\frac{n-h}\beta}\bigl(\frac{\overline\sigma}
   {\underline\sigma}\bigr)^{hc_0\frac n{\log n}}.
\end{aligned}
\end{equation}
We continue by showing that 
$T_{\bj_{N_h}}^{\Xi^{n-h}(\tilde\omega)}
 (\widehat w_1(T_{\bk_{N_m}}^{\Xi^n(\tilde\omega)}))$ and 
$T_{\bj'_{N_h}}^{\Xi^{n-h}(\tilde\omega)}
 (\widehat w_1(T_{\bk_{N_m}}^{\Xi^n(\tilde\omega)}))$
are nearly parallel to each other for all
$\bj_{N_h},\bj'_{N_h}\in\Sigma_*^{\tilde\omega}(n-h,n)$ 
when $n\in\mathbb N$ is sufficiently large.
Indeed, recalling \eqref{w1decomb} and \eqref{v2decomb}, we have
\begin{equation*}
\begin{aligned}
&T_{\bj_{N_h}}^{\Xi^{n-h}(\tilde\omega)}
(\widehat w_1(T_{\bk_{N_m}}^{\Xi^n(\tilde\omega)}))
   =a\,T_{\bj_{N_h}}^{\Xi^{n-h}(\tilde\omega)}(
   v_1(T_{\bi_{N_{n-h}}\bj_{N_h}}^{\tilde\omega}))\\
 &+b\,|T_{\bj_{N_h}}^{\Xi^{n-h}(\tilde\omega)}(
   v_2(T_{\bi_{N_{n-h}}\bj_{N_h}}^{\tilde\omega}))|
   \bigl(c(v_2(T_{\bi_{N_{n-h}}\bj_{N_h}}^{\tilde\omega}))
   (v_1(T_{\bi_{N_{n-h}}}^{\tilde\omega})
   +d(v_2(T_{\bi_{N_{n-h}}\bj_{N_h}}^{\tilde\omega}))
   v_2(T_{\bi_{N_{n-h}}}^{\tilde\omega})\bigr),
\end{aligned}
\end{equation*}
and thus
\begin{equation*}
\begin{aligned}
|\tan(\sphericalangle\bigl(T_{\bj_{N_h}}^{\Xi^{n-h}(\tilde\omega)}
   (\widehat w_1(T_{\bk_{N_m}}^{\Xi^n(\tilde\omega)})),
   &v_2(T_{\bi_{N_{n-h}}}^{\tilde\omega})\bigr))|\\ 
&\leq\Big|\frac{c(v_2(T_{\bi_{N_{n-h}}\bj_{N_h}}^{\tilde\omega}))}
   {d(v_2(T_{\bi_{N_{n-h}}\bj_{N_h}}^{\tilde\omega}))}\Big| +
   \Big|\frac{ea}{bd(v_2(T_{\bi_{N_{n-h}}\bj_{N_h}}^{\tilde\omega}))}\Big|
   (\frac{\overline\sigma}{\underline\sigma}\bigr)^{{N_h}}, 
\end{aligned}
\end{equation*}
where $b$ and $d(v_2(T_{\bi_{N_{n-h}}\bj_{N_h}}^{\tilde\omega}))$ and $e$ are 
close to one. The factor $e$ appears since 
$T_{\bj_{N_h}}^{\Xi^{n-h}(\tilde\omega)}
  (v_1(T_{\bi_{N_{n-h}}\bj_{N_h}}^{\tilde\omega}))$
need not to be perpendicular to $v_2(T_{\bi_{N_{n-h}}}^{\tilde\omega})$.
By using the definition of $C_n^{\tilde\omega}(\bk_{N_m})$
and inequality \eqref{anglebound} we deduce that for sufficiently large 
$n\in\mathbb N$ 
\begin{equation}\label{smallanglegen}
\begin{aligned}
\sphericalangle\bigl(&T_{\bj_{N_h}}^{\Xi^{n-h}(\tilde\omega)}
   (\widehat w_1(T_{\bk_{N_m}}^{\Xi^n(\tilde\omega)})),
   T_{\bj'_{N_h}}^{\Xi^{n-h}(\tilde\omega)}(
   \widehat w_1(T_{\bk_{N_m}}^{\Xi^n(\tilde\omega)}))\bigr)\\
 &\le 2\max_{\bj=\bj_{N_h},\hspace{1mm} \bj'_{N_h}}\sphericalangle\bigl(
  T_\bj^{\Xi^{n-h}(\tilde\omega)}(\widehat w_1(T_{\bk_{N_m}}^{\Xi^n(\tilde\omega)})),
   v_2(T_{\bi_{N_{n-h}}}^{\tilde\omega})\bigr)
 \le 5\gamma_2^{n-h}\bigl(\frac{\overline\sigma}
   {\underline\sigma}\bigr)^{hc_0\frac n{\log n}}
\end{aligned}
\end{equation}
for all $\bj_{N_h},\bj'_{N_h}\in\Sigma_*^{\tilde\omega}(n-h,n)$, where
$\gamma_2=\max\{\lambda_1^{-\frac 1\alpha},
  \bigl(\frac\rho\lambda\bigr)^{\frac 1\beta}\}$.

Writing $\bj_{N_h}=\bj_{N_1}\bj_{N_{h-1}}$, we get
\[
T_{\bj_{N_h}}^{\Xi^{n-h}(\tilde\omega)}(w_1(T_{\bk_{N_m}}^{\Xi^n(\tilde\omega)}))
=T_{\bj_{N_1}}^{\Xi^{n-h}(\tilde\omega)}(T_{\bj_{N_{h-1}}}^{\Xi^{n-h+1}(\tilde\omega)}
(w_1(T_{\bk_{N_m}}^{\Xi^n(\tilde\omega)}))).
\]
Now for any fixed $\bj_{N_{h-1}}\in\Sigma_*^{\tilde\omega}(n-h+1,n)$ inequality
\eqref{smallanglegen} implies
\begin{equation}\label{smallangle}
\begin{aligned}
\sphericalangle(&T_{\bj_{N_1}}^{\Xi^{n-h}(\tilde\omega)}
  (T_{\bj_{N_{h-1}}}^{\Xi^{n-h+1}(\tilde\omega)}
  (w_1(T_{\bk_{N_m}}^{\Xi^n(\tilde\omega)}))),
  T_{\bj'_{N_1}}^{\Xi^{n-h}(\tilde\omega)}(T_{\bj_{N_{h-1}}}^{\Xi^{n-h+1}
  (\tilde\omega)}(w_1(T_{\bk_{N_m}}^{\Xi^n(\tilde\omega)}))))\\
&\le 5\gamma_2^{n-h}\bigl(\frac{\overline\sigma}
   {\underline\sigma}\bigr)^{hc_0\frac n{\log n}}
\end{aligned}
\end{equation} 
for all $\bj_{N_1},\bj'_{N_1}\in\Sigma_*^{\tilde\omega}(n-h,n-h+1)$. 
Let $n_\delta\in\mathbb N$ be such that 
$5\gamma_2^{n_\delta-h_{n_\delta}}\bigl(\frac{\overline\sigma}
 {\underline\sigma}\bigr)^{h_{n_\delta}c_0\frac{n_\delta}{\log n_\delta}}<\delta$.
From \eqref{smallangle} we see that for large enough $n\ge n_\delta$ we have 
$\tilde\omega\in H_{n-h+1}$ for all $h\le h_n$. By independence, inequality 
\eqref{parallel} implies that for all such $n\ge n_\delta$
\[
P(G_n)\le P(\bigcap_{l-h_n+1}^n H_l)<\eta^{h_n}.
\] 

Select sufficiently large $C$ and sufficiently small $c_0$ such that
\[
\eta^C<e^{-1},\quad
\sqrt{\gamma_2}(\frac{\overline\sigma}{\underline\sigma})^{Cc_0}<1\quad
\text{and}\quad
\gamma_1<(\frac{\underline\sigma^\alpha}{\overline\sigma^\alpha M})^{Cc_0}.
\] 
Moreover, let $n\in\mathbb N$ be so large that 
\[
n-C\log n>\frac n2,\quad n\ge\max\{2L,n_\delta,2n_0\}\quad\text{and}\quad 
\bigl(\gamma_1(\frac{\overline\sigma^\alpha M}{\underline\sigma^\alpha})^{Cc_0}
  \bigr)^n<1-\sqrt\gamma.
\] 
Observing that for such $n$ all the arguments above (see 
\eqref{allsame}, \eqref{anglebound} and \eqref{smallangle}) hold, gives 
$P(G_n)\le\eta^{\lfloor C\log n\rfloor}$. Since 
$\sum_{n=1}^\infty\eta^{C\log n}<\infty$, 
Borel-Cantelli lemma implies that for $P$-almost all 
$\tilde\omega\in\widetilde\Omega(L)\cap E_{n_0}$ we have $\tilde\omega\in G_n$ 
only for finitely many $n\in\mathbb N$. Since this is true for all 
$L,n_0\in\mathbb N$ (and for all rational $1<\lambda_1<\lambda$, 
$0<\rho<\lambda$, $0<\gamma<1$ and $\frac{\lambda_1}\lambda<\gamma_1<1$),
inequality \eqref{eq66} is valid for 
$P$-almost all $\tilde\omega\in\widetilde\Omega$.
This completes the proof of Theorem~\ref{main}.
\end{proof} 

We conclude this section by discussing the assumptions of Theorem~\ref{main}.

\begin{remark}\label{assumptions}
We say that a family $F^\lambda=\{f_1,\dots,f_n\}$ is parallel if
there exists $v$ such that $f_1(v),\dots,f_n(v)$ are parallel. If a family 
$F^\lambda$ is not parallel, it is irreducible in the sense of \cite{BL}.
Note that the complement of the set of parallel families is open in any 
reasonable metric. The set of non-parallel families is also
dense in the following sense: Suppose $F^\lambda=\{f_1,\dots,f_n\}$ is parallel.
Then the family
$F^{\lambda'}=\{f_1,\dots,f_n,R_\varepsilon\circ f_1\}$, where $R_\varepsilon$ 
is a rotation by angle $\varepsilon$, is not parallel. If we view 
$F^\lambda$ as a degenerate family $\{f_1,\dots,f_n,f_1\}$, then 
$F^{\lambda'}$ is close to $\{f_1,\dots,f_n,f_1\}$. 
Assumption \eqref{direction} is thus weak in the sense that if it is not 
satisfied then after a small perturbation in the family {\bf F} and in the
measure $P$ it is satisfied. In particular, if all the families in
{\bf F} are non-parallel, then \eqref{direction} is valid for any $P$ for 
which $P(\{\tilde\omega\in\widetilde\Omega\mid N_1(\tilde\omega)=1\})>0$. 
Further, the 
validity of \eqref{direction} is not destroyed by small perturbations. 
Of course, it should be noted that ergodicity is not necessarily preserved 
under perturbations.

Assumption \eqref{direction} is not necessary for the validity of 
Theorem~\ref{main}. Indeed, it is not difficult 
to show that Theorem~\ref{main} still holds if assumption \eqref{direction} is 
replaced by the assumption that all the maps in 
{\bf F} fix two given directions. Therefore, the deterministic iterated 
function system considered in \cite[Example 2]{E} (see also \cite{Er,PU,SS}) 
is a special case of our theorem. Hence, in Theorem~\ref{main}, the assumption 
$\overline\sigma<\frac 12$ is necessary. 

By  \eqref{smallstep}, the condition 
$\int_{\widetilde\Omega}N_1(\tilde\omega)\,dP(\tilde\omega)<\infty$ implies that
for any $\varepsilon>0$ we have $N_{l+1}-N_l<\varepsilon l$ for large enough 
$l\in\mathbb N$. Thus by \eqref{control} assumption \eqref{decay} is only 
slightly stronger than the condition 
$\int_{\widetilde\Omega}N_1(\tilde\omega)\,dP(\tilde\omega)<\infty$.

The strongest assumptions in Theorem~\ref{main} are the independence
between neck levels and the condition $D=2$. In the proof the role of 
independence is quite clear whilst the restriction $D=2$ is more 
hidden. The latter assumption is used in the definition of the lower pressure 
$\underline{\tilde p}^{\tilde\omega}$. Clearly, the definition could be 
modified in higher dimensional case.
However, for $D\ge 3$ the main problem is that ellipsoids have more than two
semiaxes and, for example, the counterpart of inequality \eqref{anglebound} 
is not obvious.

Examples of measures satisfying assumption \eqref{decay} and the independence 
between neck levels are discussed in 
Example~\ref{assumpsat}. For example assuming exponential decay for 
$P(\{\tilde\omega\in\widetilde\Omega\mid N_1(\tilde\omega)\ge t\})$ 
implies \eqref{decay}.
\end{remark}

According to the following proposition for self-similar 
code tree fractals the assumptions of Theorem~\ref{pexists} (with 
$\overline\sigma<1$ replaced by $\overline\sigma<\frac 12$)
are sufficient for the validity of the dimension formula.

\begin{proposition}\label{selfsimilar}
Let $P$ be an ergodic $\Xi$-invariant probability measure on 
$\widetilde\Omega$ such that $\int_{\widetilde\Omega}N_1(\tilde\omega)\,
dP(\tilde\omega)<\infty$. Assume that ${\bf F}$ is a family of iterated
function systems consisting of similarities
on $\mathbb R^D$ and $0<\underline\sigma\le\overline\sigma<\frac 12$. Then for 
$P$-almost all $\tilde\omega\in\widetilde\Omega$ 
\[
\dimH(A_{\ba}^{\tilde\omega})=\min\{\alpha_0,D\}
\]
for $\mathcal L^{D\mathcal A}$-almost all $\ba\in\mathbb R^{D\mathcal A}$.
\end{proposition}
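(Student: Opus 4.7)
The plan is to apply Theorem~\ref{0510} after identifying the affinity dimension $d^{\tilde\omega}$ with $\alpha_0$ up to the bound $D$, for $P$-almost every $\tilde\omega$. The crucial simplification in the self-similar case is that every singular value of $T_i^\lambda$ equals the similarity ratio $r_i^\lambda$, so the singular value function is multiplicative in every dimension: $\Phi^\alpha(TU)=\Phi^\alpha(T)\Phi^\alpha(U)$ for all $T,U$ arising from $\bf F$. Multiplicativity removes the need for the auxiliary lower function $\underline\Phi^\alpha$ and the delicate directional arguments of Theorem~\ref{main}.

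The upper bound $d^{\tilde\omega}\le\alpha_0$ is immediate: for $\alpha>\alpha_0$ we have $p^{\tilde\omega}(\alpha)<0$ by Theorem~\ref{pexists}, hence $S^{\tilde\omega}(k,\alpha)\to 0$ exponentially, and the trivial cover of $\Sigma^{\tilde\omega}$ by length-$k$ cylinders gives $M^\alpha(\Sigma^{\tilde\omega})=0$. For the reverse inequality I fix non-integral $\alpha<\min\{D,\alpha_0\}$ and construct a probability measure $\mu^{\tilde\omega}$ on $\Sigma^{\tilde\omega}$ satisfying
\[
\mu^{\tilde\omega}([\bi_l])\le c(\tilde\omega)\Phi^\alpha(T_{\bi_l}^{\tilde\omega})
\]
for all $\bi_l$, by extracting a weak-$*$ subsequential limit from the sequence $(\mu_m^{\tilde\omega})$ in \eqref{muomega}. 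For $N_{n-1}\le l<N_n$ and $m\ge n$, multiplicativity of $\Phi^\alpha$ combined with the neck-level property (all sub-code-trees rooted at level $N_n$ coincide) factorises both the numerator and the denominator of $\mu_m^{\tilde\omega}([\bi_l])$ through level $N_n$, and the tail factors beyond $N_n$ cancel. What remains is the estimate
\[
\mu_m^{\tilde\omega}([\bi_l])\le\frac{\Phi^\alpha(T_{\bi_l}^{\tilde\omega})\,(\overline\sigma^\alpha M)^{N_n-N_{n-1}}}{S^{\tilde\omega}(N_n,\alpha)}.
\]
Since $\alpha<\alpha_0$ forces $\tilde p^{\tilde\omega}(\alpha)>0$ by Remark~\ref{Julgubben}, one has $S^{\tilde\omega}(N_n,\alpha)\ge\zeta^n$ for some $\zeta>1$ and all large $n$; together with $N_n-N_{n-1}=o(n)$ from \eqref{smallstep}, the prefactor is bounded uniformly in $m$ by some $c(\tilde\omega)<\infty$. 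The mass distribution principle then yields $M^\alpha(\Sigma^{\tilde\omega})\ge 1/c(\tilde\omega)>0$, so $d^{\tilde\omega}\ge\alpha$; letting $\alpha\uparrow\min\{D,\alpha_0\}$ along non-integers gives $\min\{D,d^{\tilde\omega}\}=\min\{D,\alpha_0\}$, and Theorem~\ref{0510} closes the argument.

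The main obstacle is the neck-level bookkeeping required to make the cancellation in $\mu_m^{\tilde\omega}([\bi_l])$ rigorous for every $m\ge n$ and every cylinder point chosen in the definition of $\mu_m^{\tilde\omega}$, as well as patching the uniform bound on $c(\tilde\omega)$ at small levels $l$ where the denominator has not yet grown (handled by a trivial bound $\mu_m^{\tilde\omega}([\bi_l])\le 1\le\underline\sigma^{-\alpha l}\Phi^\alpha(T_{\bi_l}^{\tilde\omega})$). The various complications from the proof of Theorem~\ref{main}---the auxiliary function $\underline\Phi^\alpha$, the analysis of ellipse semi-axis alignment under iteration, the block Borel--Cantelli argument, and the hypotheses \eqref{direction} and \eqref{decay}---are all bypassed because multiplicativity of $\Phi^\alpha$ relates each cylinder mass directly to the factor $\Phi^\alpha(T_{\bi_l}^{\tilde\omega})$.
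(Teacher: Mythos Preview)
Your proposal is correct and follows essentially the same route as the paper: exploit multiplicativity of $\Phi^\alpha$ for similarities so that the argument of Lemma~\ref{poslowpressure} goes through directly (with $\underline\Phi^\alpha=\Phi^\alpha$), yielding the measure inequality \eqref{eq66}, and then feed this into the machinery of Theorem~\ref{0510}. The only cosmetic difference is that you pass through $d^{\tilde\omega}$ via the mass distribution principle and invoke Theorem~\ref{0510} as a black box, whereas the paper re-enters the energy-integral part of that proof with the constructed measure; both are equivalent, and your numerator bound $(\overline\sigma^\alpha M)^{N_n-N_{n-1}}$ should be replaced by $M^{N_n-N_{n-1}}$ (as in Lemma~\ref{poslowpressure}) to be safe when $\overline\sigma^\alpha M<1$.
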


\begin{proof}
Since the mappings are similarities, $\Phi^\alpha$ is multiplicative. We may  
proceed as in the proof of Lemma~\ref{poslowpressure} to verify that 
the measure constructed in \eqref{muomega} satisfies  \eqref{eq66}.
The conclusion of  Proposition \ref{selfsimilar} then follows as in the 
proof of Theorem~\ref{0510} as described in the beginning of the the proof of 
Theorem~\ref{main}.
\end{proof}

\begin{remark}\label{UOSCresult}
Combining \eqref{eq66} with the methods of \cite{BHS08}, we obtain a 
generalization of the main theorem in \cite{BHS08}, that is, under the 
assumptions of Theorem~\ref{pexists}
and assuming that the maps are similarities, the dimension formula
is valid $P$-almost surely for any $\ba\in\mathbb R^{D\mathcal A}$ for which the
uniform open set condition \eqref{UOSC2} is satisfied. 
\end{remark}


\begin{thebibliography}{BHS3}

\bibitem[BHS1]{BHS2005} M. Barnsley, J. E. Hutchinson and \"O. Stenflo,
  \emph{A fractal valued random iteration algorithm and fractal hierarchy}, 
  Fractals \textbf{13} (2005), 111--146. 

\bibitem[BHS2]{BHS2008} M. Barnsley, J. E. Hutchinson and \"O. Stenflo,
  \emph{V-variable fractals: Fractals with partial self similarity},
  Adv. Math. \textbf{218} (2008), 2051--2088. 

\bibitem[BHS3]{BHS08} M. Barnsley, J. E. Hutchinson and \"O. Stenflo,
  \emph{V-variable fractals: Dimension results}, Forum Math., to appear.

\bibitem[BL]{BL} P. Bougerol and J. Lacroix, \emph{Products of random matrices 
  with applications to Schr\"odinger operators}, Birkh\"auser, 1985.

\bibitem[D]{D91} R. Durrett, \emph{Probability: theory and examples}, 
  Wadsworth \& Brooks/Cole, 1991.

\bibitem[Ed]{E} G. A. Edgar, \emph{Fractal dimension of self-affine sets: some 
  examples}, Rend. Circ. Mat. Palermo (2) Suppl. \textbf{28} (1992), 341-–358.

\bibitem[Er]{Er} P. Erd\"os, \emph{On a family of symmetric Bernoulli 
  convolutions}, Amer. J. Math. \textbf{61} (1939), 974--976.
 
\bibitem[F1]{F86} K. J. Falconer, \emph{Random Fractals}, 
  Math. Proc. Cambridge Philos. Soc. \textbf{100} (1986), 559--582. 

\bibitem[F2]{F88} K. J. Falconer, \emph{The Hausdorff dimension of self-affine 
  fractals}, Math. Proc. Cambridge Philos. Soc. \textbf{103} (1988), 339--350. 

\bibitem[F3]{F95} K. J. Falconer, \emph{Sub-self-similar sets},
  Trans. Amer. Math. Soc. \textbf{347} (1995), 3121-–3129.

\bibitem[F4]{F4} K. J. Falconer, \emph{Techniques in Fractal Geometry}, 
  John Wiley \& Sons, 1997.

\bibitem[FM]{FM09} K. J. Falconer and J. Miao, \emph{Random subsets of 
  self-affine fractals}, Mathematika \textbf{56} (2010), 61–-76. 

\bibitem[H]{H} J. E. Hutchinson, \emph{Fractals and self-similarity},
  Indiana Univ. Math. J. \textbf{30} (1981), 713-–747. 

\bibitem[JPS]{JPS} T. Jordan, M. Pollicott and K. Simon, \emph{Hausdorff 
  dimension for randomly perturbed self affine attractors}, Comm. Math. Phys. 
  \textbf{270} (2007), 519-–544.

\bibitem[KV]{KV09} A.\ K\"aenm\"aki and M.\ Vilppolainen, \emph{Dimension and 
  measures on sub-self-affine sets}, Monatsh. Math. \textbf{161} (2010),
  271--293.

\bibitem[MU]{MU03} R.\ D.\ Mauldin and M.\ Urba\'nski, \emph{Graph directed 
  Markov systems--Geometry and Dynamics of limit Sets} Cambridge Tracts 
  in Mathematics \textbf{148}, Cambridge University Press,
  Cambridge, 2003.

\bibitem[PU]{PU} F. Przytycki and M. Urba\'nski, \emph{On the Hausdorff 
  dimension of some fractal sets}, Studia Math. \textbf{93} (1989), 155-–186.

\bibitem[RV]{RV} T. Rajala and M. Vilppolainen, \emph{Weakly controlled Moran 
  constructions and iterated function systems in metric spaces},
  Illinois J. Math., to appear.

\bibitem[SS]{SS} K. Simon and B. Solomyak, \emph{On the dimension of 
  self-similar sets}, Fractals \textbf{10} (2002), 59-–65.

\bibitem[S]{S98} B. Solomyak, \emph{Measure and dimensions for some fractal 
  families}, Math. Proc. Cambridge Philos. Soc. \textbf{124} (1998), 531--546.

\end{thebibliography}
\end{document}